\theoremstyle{plain}
\newtheorem{thm}{Theorem}[section]
\newtheorem{lem}{Lemma}[section]
\newtheorem{prop}{Proposition}[section]
\newtheorem{defn}{Definition}[section]
\newtheorem{ex}[thm]{Example}
\newtheorem{rem}{Remark}[section]
\newcommand{\begintheorem}{\addtocounter{equation}{1}\begin{theorem}}
\newcommand{\beginlemma}{\addtocounter{equation}{1}\begin{lemma}}
\newcommand{\beginproposition}{\addtocounter{equation}{1}\begin{proposition}}
\newcommand{\begindefinition}{\addtocounter{equation}{1}\begin{definition}}
\newcommand{\begincorollary}{\addtocounter{equation}{1}\begin{corollary}}
\begin{document} 
%
%
%
%
%
%
%
%
%

\title[ Weakly $ p $-sequentially continuous differentiable mappings ]
{Weakly $ p $-sequentially continuous differentiable mappings}
\author {M.\ Alikhani.}
\address{Department of Mathematics, University of Isfahan}
\email{m2020alikhani@ yahoo.com}
\subjclass{Primary: 46B25; Secondary: 46E50, 46G05.}
\keywords{Dunford-Pettis property; uniformly completely continuous subsets; weakly sequentially continuous mapping on bounded sets}

\begin{abstract} In this article, we introduce the notions uniformly $ p $-convergent sets and weakly $ p$-sequentially continuous differentiable mappings.\ Then,  we obtain a sufficient condition for those Banach spaces which either contains no copy of $ \ell_{1} $ or have the $ p $-Schur property.
 Finally, we give a characterization for the weakly $ p $-sequentially continuous differentiable mappings.\
\end{abstract}
\maketitle
\section{Introduction:} 
Suppose that $ X $ and $ Y $ are real Banach spaces and $ U $ is an open subset of $ X. $\ A subset $ B \subset U $ is $ U $-bounded, if it is bounded and the distance 
between $ B$ and the boundary $ $ of $ U$ is strictly positive \cite{ce3}.\
Let $ C^{1u}(U, Y )$ be the space of all differentiable mappings
$ f : U \rightarrow Y $ whose derivative $ f^{\prime} : U \rightarrow L(X, Y ) $ is uniformly continuous on
$ U$-bounded subsets of $ U. $\ In recent years, several authors have studied properties of $ C^{1u} $ mappings between Banach spaces.\  For more information in this area, we refer the reader to \cite{dh, DM,h} and references therein.\ Recall that, a mapping $ f : U \rightarrow Y $ is compact if it takes $ U$-bounded subsets of $ U $ into relatively compact subsets of $ Y. $\
Aron {\rm (\cite[Theorem 1.7]{A})}, proved that a holomorphic linear function $ f : X \rightarrow \mathbb{C} $
is compact on bounded subsets if and only if its derivative
$ f^{\prime}:X\rightarrow X^{\ast} $ is a compact mapping.\ Bombal et al. \cite{ce},
for a holomorphic function $  f$ of bounded type on a complex Banach space $ X, $  proverd
that its derivative $ f^{\prime} :X\rightarrow X^{\ast}$ takes bounded sets into certain families of sets if and only
if $ f $ may be factored in the form $ f = g \circ S, $ where $  S$ is in some associated operator ideal,
and $ g $ is a holomorphic function of bounded type.\ Following this, Cilia et al. {\rm (\cite[Theorem 3.1]{ce4})}, proved that if $ f^{\prime} $ is uniformly continuous on bounded sets and with values in the space $ K(X, Y )$ of compact operators from $ X$ into $ Y, $ then $ f^{\prime} $ is compact if and only if $ f$ is weakly uniformly
continuous on bounded sets.\
It is well known {\rm (\cite[Proposition 3.2]{gg})}, that a bounded linear operator $ T:X\rightarrow Y $ between Banach spaces is completely
continuous if and only if its adjoint $ T^{\ast}$ takes bounded subsets of $ Y^{\ast} $ into
uniformly completely continuous subsets, often called $ (L) $-subsets, of $ X^{\ast} .$\ Let us recall from \cite{spd}, that 
a subset $ K \subseteq L(X, Y ) $ is called uniformly completely
continuous, if for every weakly null sequence $ (x_{n})_{n} $ in $ X, $ it follows:
$$\lim_{n\rightarrow \infty}\sup_{T \in K}\parallel T(x_{n}) \parallel =0. $$
Motivated  by this work, Cilia and Guti\'errez {\rm (\cite[Theorem 2.1]{ce2})}  gave a similar
result for differentiable mappings.\ In fact, they showed that if $ f \in C^{1u}(U, Y ) ,$ then $ f$ takes weakly Cauchy  $ U$-bounded
sequences into convergent sequences if and only if $ f^{\prime} $ takes $ U$-bounded Rosenthal subsets
of $ U$ into uniformly completely continuous subsets of $ X^{\ast} .$\\ 
Recently, Li et al.\cite{ccl1} generalized the concepts $ (L) $ sets and $ (V) $ sets to
the $ p $-$ (V ) $ sets for $ 1\leq p \leq \infty. $\ It is well known {\rm (\cite[Theorem 14]{g12})} that, a bounded linear operator $T:X\rightarrow Y $ between Banach spaces is $ p$-convergent
if and only if its adjoint $ T^{\ast} : Y^{\ast}\rightarrow X^{\ast}$ takes bounded subsets of $ Y^{\ast} $ into $p $-$ (V) $ subsets of $ X^{\ast} .$\\ 
Inspired by the above works, we introduce the notions uniformly $ p $-convergent sets and weakly $ p$-sequentially continuous mappings.\ In the sequel, we obtain a sufficient condition for those Banach spaces which either contains no copy of $ \ell_{1} $ or have the $ p $-Schur property.\
Finally, 
we show that 
if $ U $ is an open convex subset of $ X $ and $ f \in C^{1u}(U, Y ) ,$ then $ f $ takes $ U$-bounded and
weakly $ p $-Cauchy 
sequences into norm convergent sequences if and only if $ f^{\prime} $ takes $ U$-bounded and weakly $ p $-precompact subsets of $ U$ into uniformly $ p $-convergent
subsets of $ L(X, Y ). $\ 
\section{Notions and Definitions:} 
Throughout this paper $ 1\leq p < \infty $ and $ 1\leq p < q\leq \infty,$ except for the cases where we consider other assumptions.\ Also, we suppose
$ X $ and $ Y$ will denote real Banach spaces, $ U \subseteq X $ will be an open convex subset,
$p^{\ast}$ is the H$\ddot{\mathrm{o}}$lder conjugate of $p.$\ The word `operator' will always mean
bounded linear operator. For any Banach space X, the dual space of bounded
linear functionals on X will be denoted by $ X^{\ast} .$\ Also, we denote the closed unit ball of $ X$ by $ B_{X}. $\ For a bounded linear operator $ T : X \rightarrow Y, $ the adjoint of the operator $ T $
is denoted by $ T^{\ast}. $\ The space of all bounded linear operators and weakly compact operators from $ X$ to $ Y$ will be denoted by $ L(X,Y ) $ and
$ W(X,Y ), $ respectively.\ If $ X_{1},\cdot\cdot\cdot, X_{k} $ are Banach spaces, the space of all $ k$-linear (continuous) mappings
from $ X_{1}\times\cdot\cdot\cdot\times X_{k} $ into $ Y$ is denoted by $ L^{k}(X_{1},\cdot\cdot\cdot,X_{k}, Y) .$\ A mapping $ P:X\rightarrow Y $ is a $ k$-homogeneous (continuous) polynomial if there is a $ k$-linear mapping
$ A : X\times\cdot\cdot\cdot\times X \rightarrow Y $ such that $ P(x) = A(x, . . . , x) $ for all $ x \in X. $\
The space of all $ k$-homogeneous continuous polynomials from $ X $ into $ Y $ is denoted by $ P(^{k}X,Y).$\
Note that correspondent each $ P\in P(^{k}X , Y), $ we can associate a unique symmetric $ k $-linear mapping $ \hat{P}: X\times\cdot\cdot\cdot\times X\rightarrow Y $ so that $ P(x)=\hat{P}(x,\cdot\cdot\cdot,x). $\
A mapping $ f:X\rightarrow Y $ is holomorphic if, for each $ x\in X $ there are $ r>0 $ and a sequence $ (P_{k}) $ of polynomials, with
$P_{k}\in P(^{k}X,Y)$ such that $ f$ may be given by its Taylor series expansion around $ x: $
$$ f(y)=\sum_{k=0} ^{\infty}P_{k}(y-x),$$
uniformly for $ \parallel y-x\parallel < r. $\ We use the notation $ \mathcal{H} (X,Y)$ for the space of all holomorphic mappings from $ X$ into $ Y. $\ A mapping $ f \in \mathcal{H} (X,Y)$ is called bounded type, if $ f $ is bounded on bounded sets.\ The space of all bounded type mappings from $ X$ into $ Y $ is denoted by $ \mathcal{H}_{b} (X,Y).$ 
If the range space is omitted in the above notations, it is understood to be the real field $ \mathbb{R} $ for instance, $ \mathcal{H}(X):= \mathcal{H} (X,\mathbb{R}) ,$
$ P(^{k}X):=P(^{k}X, \mathbb{R}) $ and  $ \mathcal{H}_{b} (X):= \mathcal{H}_{b} (X,\mathbb{R}). $\ 
Given $ x, y \in X, $ we write $ I(x, y) $ for the
segment with bounds x and y, that is,
$$ I(x, y)=\lbrace x+\lambda (y-x): 0\leq \lambda \leq 1 \rbrace .$$\
An operator is completely
continuous, if it takes weakly convergent sequences into norm convergent sequences.\ The subspace of all such operators
is denoted by $ CC(X, Y ).$\
Let us recall from \cite{djt}, that a sequence $ (x_{n})_{n} $ in $ X $ is called weakly $ p $-summable, if $ (x^{\ast}(x_{n}))_{n} \in \ell_{p}$ for each $ x^{\ast}\in X^{\ast} $.\ A sequence $(x_{n})_{n}$ in $ X $ is called weakly $ p $-convergent to $ x\in X,$ if $ (x_{n} - x)_{n}$ is weakly $ p$-summable.\ A sequence $(x_{n})_{n}$ in $ X $ is called weakly
$ p $-Cauchy, if $ (x_{m_{k}}-x_{n_{k}})_{k}$ is weakly $ p $-summable for any increasing sequences $ (m_{k})_{k} $
and $ (n_{k})_{k} $ of positive integers \cite{ccl}.\ Note that, every weakly $ p $-convergent sequence is
weakly $ p$-Cauchy, and the weakly $ \infty $-Cauchy sequences are precisely the weakly
Cauchy sequences.\ 
 An operator $ T : X \rightarrow Y $ is called $ p$-convergent, if $ T $ maps weakly $ p $-summable
sequences into norm null sequences \cite{cs}.\ The class of all $ p$-convergent operators from $ X$
into $ Y $ is denoted by $ C_{p}(X,Y) .$\ A  Banach space $ X $ has the $ p$-Schur property, if the
identity operator on $ X$ is $ p $-convergent.\ A bounded subset $ K$ of
$ X^{\ast} $ is a $ p $-$ (V ) $ set, if for every weakly $ p $-summable sequence
$ (x_{n})_{n} $ in $ X, $ it follows:
$$ \lim _{n} \sup_{x^{\ast}\in K} \Vert x^{\ast}(x_{n})\Vert =0.$$
A subset $ K$ of $ X $ is called weakly $ p$-precompact,
if every sequence from $ K $ has a weakly $ p$-Cauchy subsequence \cite{ccl}.\ Note that the
weakly $ \infty $-precompact sets are precisely the weakly precompact sets or Rosenthal sets.\
We refer the reader for undefined terminologies to the
classical references \cite{AlbKal, djt}.\
\section{Main results:}
In this section, we obtain a characterization for
the weakly p-sequentially continuous mappings.\ Namely,
we show
that $ f:U\rightarrow Y $ is weakly $ p $-sequentially continuous if and only if $ f^{\prime} $ takes $ U$-bounded and weakly $ p $-precompact subsets of $ U$ into uniformly $ p $-convergent
subsets of $ L(X, Y ). $\ 

\begin{defn}\label{d1}
Let $ K \subset L(X, Y ). $\ We say that $ K $ is a uniformly $ p $-convergent set, if for every weakly $ p $-summable sequence
$ (x_{n})_{n} $ in $ X, $ it follows:
$$ \lim _{n} \sup_{T \in K} \Vert T(x_{n})\Vert =0.$$
\end{defn}
As an immediate consequence of the Definition \ref{d1}, one can conclude
that the following elementary results.
\begin{prop}\label{p1} 
$ \rm{(i)} $ Every subset of an uniformly $ p $-convergent set in $ L(X,Y) $ is uniformly $ p $-convergent.\\
$ \rm{(ii)} $ Absolutely closed convex hull of an uniformly $ p $-convergent set in $ L(X,Y) $ is uniformly $ p $-convergent.\\
$ \rm{(iii)} $ If $ K_{1},\cdot \cdot\cdot, K_{n} $ are uniformly $ p $-convergent sets in $ L(X,Y), $ then $ \displaystyle\bigcup_{i=1}^{n} K_{i}$ and $ \displaystyle \sum_{i=1}^{n} K_{i} $ are uniformly $ p $-convergent sets in $ L(X,Y). $\
\end{prop}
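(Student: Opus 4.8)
The plan is to derive all three statements directly from the defining condition of Definition \ref{d1}, together with elementary facts about suprema and the triangle inequality; no deep machinery is required, and throughout I would fix an arbitrary weakly $ p $-summable sequence $ (x_{n})_{n} $ in $ X $ and verify that the relevant supremum tends to $ 0. $

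For part (i), I would simply observe that if $ K^{\prime}\subseteq K, $ then $ \sup_{T\in K^{\prime}}\Vert T(x_{n})\Vert \leq \sup_{T\in K}\Vert T(x_{n})\Vert $ for every $ n, $ so the vanishing of the right-hand limit forces the left-hand limit to vanish as well. For part (iii), the union is handled by the identity $ \sup_{T\in\bigcup_{i}K_{i}}\Vert T(x_{n})\Vert=\max_{i}\sup_{T\in K_{i}}\Vert T(x_{n})\Vert, $ which is a maximum of finitely many null sequences; the sum is handled by the triangle inequality, giving $ \sup_{T\in\sum_{i}K_{i}}\Vert T(x_{n})\Vert\leq\sum_{i}\sup_{S\in K_{i}}\Vert S(x_{n})\Vert, $ a finite sum of null sequences. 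Both conclusions then follow since a finite maximum (resp. finite sum) of sequences converging to $ 0 $ converges to $ 0. $

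For part (ii), I would first treat the absolutely convex hull $ \mathrm{aco}(K). $ A typical element is a finite combination $ T=\sum_{i}\lambda_{i}T_{i} $ with $ T_{i}\in K $ and $ \sum_{i}\vert\lambda_{i}\vert\leq 1, $ so the triangle inequality yields $ \Vert T(x_{n})\Vert\leq\sum_{i}\vert\lambda_{i}\vert\,\Vert T_{i}(x_{n})\Vert\leq\sup_{S\in K}\Vert S(x_{n})\Vert, $ whence $ \sup_{T\in\mathrm{aco}(K)}\Vert T(x_{n})\Vert\leq\sup_{S\in K}\Vert S(x_{n})\Vert. $ To pass to the norm closure I would invoke the fact that every weakly $ p $-summable sequence is norm bounded, say $ \Vert x_{n}\Vert\leq M $ for all $ n. $ Given $ \varepsilon>0 $ and $ T $ in the closed absolutely convex hull, I would choose $ S\in\mathrm{aco}(K) $ with $ \Vert T-S\Vert<\varepsilon/M $ and estimate $ \Vert T(x_{n})\Vert\leq\Vert S(x_{n})\Vert+\Vert T-S\Vert\,\Vert x_{n}\Vert\leq\sup_{S\in K}\Vert S(x_{n})\Vert+\varepsilon, $ with the bound uniform in $ T; $ letting $ n\to\infty $ and then $ \varepsilon\to 0 $ completes the argument.

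The only genuinely non-routine point in the entire proof is this closure step in part (ii): one must use the boundedness of weakly $ p $-summable sequences to convert a norm approximation $ \Vert T-S\Vert $ into a uniform error bound on $ \Vert T(x_{n})\Vert. $ Everything else reduces to monotonicity of the supremum and the triangle inequality, which is presumably why the author records these as ``immediate consequences'' of the definition.
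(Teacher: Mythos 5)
Your proof is correct and matches the paper's intended approach: the paper gives no proof at all, asserting the proposition ``as an immediate consequence of the Definition,'' and your monotonicity, triangle-inequality, and closure arguments supply exactly those routine details. The one step needing care --- passing to the norm closure in part (ii) via the norm-boundedness of weakly $p$-summable sequences to get an error bound uniform in $T$ --- is handled correctly.
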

\begin{rem}
$ \rm{(i)} $ If $ K\subset X^{\ast} ,$ then the definition of uniformly $ p $-convergent sets coincide
with the definition of $ p$-$ (V) $ sets.\
Also, every
uniformly $ q $-convergnt subset of $L(X,Y) $ is 
uniformly $ p $-convergnt, whenever $1\leq p < q \leq \infty. $\ Also, it would be interesting to obtain conditions under which every uniformly $ p $-convergent set in $L(X,Y) $ is 
uniformly $ q $-convergnt.\ In my opinion, this is a very interesting, but it is
a difficult question?\ In particular, we
answer this question.\ Indeed, we obtain a characterization for those Banach
spaces in which uniformly $ p $-convergent sets in $ X^{\ast} $ are uniformly $ q $-convergent (see Definition $ \rm{2.1} $ and Theorem
$ \rm{2.4} $ in \cite{afz}).\\
$ \rm{(ii)} $ Suppose that $ P\in P(^{k}X) .$\ It is easy to verify that the equivalence $ (a) \Leftrightarrow (e) $ in {\rm (\cite[Corollary 9 ]{ce})}, shows that the operator 
$ \overline{P}:X\rightarrow P(^{k}X) $ given by $ \overline{P}(x)(y) =\hat{P}(x,\cdot\cdot\cdot,x ,y)~~~(x,y \in X)$ belongs to $ C_{p} $
if and only if the derivative $ P^{\prime} \in P(^{k}X, X^{\ast}) $ defined by $ P^{\prime}(x)(y)=k \hat{P} (x,\cdot\cdot\cdot,x ,y)$ takes bounded sets of $ X $ into uniformly $ p $-convergent sets in $ X^{\ast} .$\\  
$ \rm{(iii)} $ It is well known {\rm (\cite[Theorem 11]{ce})}, that for a holomorphic function $ f $ of bounded type on a complex Banach space $ X, $ 
its derivative $ f^{\prime} :X\rightarrow X^{\ast}$ 
takes bounded sets into uniformly $ p $-convergent sets in $ X^{\ast} $ if and only
if $ f $ may be factored in the form $ f=g \circ S $ where $ S $ is $ p $-convergent
and $ g\in \mathcal{H}_{b} (X).$
\end{rem}

The following example show that, there exists a uniformly $ p $-convergnt subset of $ L(\ell_{2},Y) $ so that 
it is not uniformly $ q $-convergnt.\
\begin{ex} Let $ X=\ell_{2} $ and $ Y$ be an arbitrary Banach space.\
Since $ \ell_{2} $ does not have $ 2 $-Schur property, $ B_{\ell_{2}} $ is not a $ 2$-$ (V ) $ set in $ \ell_{2} $.\ Therefore $B_{\ell_{2}} $ 
is not uniformly $ 2 $-convergnt subsets of $ \ell_{2} .$\
On the other hand, $ \ell_{2} $ contains no copy of $ c_{0} .$\ Therefore, $ \ell_{2} $ has the $ 1 $-Schur property (see \cite{dm}).\ Hence,
$ B_{\ell_{2}} $ is a $ 1 $-$ (V ) $ set and so, $ B_{\ell_{2}} $ is a uniformly $ 1 $-convergnt subsets of $ \ell_{2} .$\ Now, let $ 0\neq y_{0}\in B_{Y} $ and
$ S : \mathbb{R}\rightarrow Y $ be the operator
given by $ S(\lambda) := \lambda y_{0}~(\lambda \in \mathbb{R}). $\ Define an operator $ T : \ell_{2}\rightarrow L(\ell_{2}, Y ) $ by
$T (\phi)(h) := \phi(h) y_{0}, $ for $ \phi \in \ell_{2}^{\ast}=\ell_{2} $ and $ h \in \ell_{2}. $\ Then
$$ \parallel T(\phi) \parallel=\sup_{h\in B_{\ell_{2}}}\parallel T(\phi) (h) \parallel=\sup_{h\in B_{\ell_{2}}}\parallel \phi(h) y_{0} \parallel=\parallel \phi \parallel .$$ 
Since, uniformly $ p $-convergent sets are stable under isometry,
there exists a uniformly $ 1 $-convergnt subset of $ L(\ell_{2},Y) $ so that 
it is not uniformly $ 2 $-convergnt.\
\end{ex}
Recall that a sequence $ (x_{n}) \subset U $ is $ U $-bounded, if the set $ \lbrace x_{n}: n \in \mathbb{N}\rbrace $ is $ U $-bounded.\ A
mapping $ f : U \rightarrow Y $ is weakly sequentially continuous, if it takes $ U $-bounded and weakly Cauchy sequences of $ U $ into norm convergent sequences in $ Y. $\ The space of all such mappings is denoted by $ C_{wsc}(U, Y )$ \cite{ce2}.
\begin{defn} \label{d2} Let $ U$ be an open convex subset of $ X$ and $ 1 \leq p \leq \infty. $\
We say that $ f: U\rightarrow Y $ is a weakly $ p $-sequentially continuous map, if it takes weakly $ p $-Cauchy and $ U $-bounded sequences of $ U $ into norm convergent
sequences in $ Y. $\ The space of all such mappings is denoted by $ C_{wsc}^{p}(U, Y ). $
\end{defn}
\begin{rem}
It is easy to show that the weakly $ \infty $-sequentially continuous maps are precisely the weakly sequentially continuous.\
Also, it is clear that
for all $ X, Y , $ and for every open subset $ U \subseteq X, $ we have $ C^{q}_{wsc} (U,Y)\subseteq C^{p}_{wsc} (U,Y), $ whenever $ 1\leq p<q \leq \infty. $\ But, we do not have any example of a mapping $ f \in C^{1u}(U, Y ) \cap C^{p}_{wsc}(U, Y ) $ which does not belong to $ C^{q}_{wsc}(U, Y ). $\
Also, it would be interesting to obtain conditions under which every mapping in the space $ C^{1u}(U, Y ) \cap C^{p}_{wsc}(U, Y ) $ takes $ U $-bounded and weakly 
$ q $-Cauchy sequences into norm convergent sequences.\ In my opinion, this is a very interesting, but it is a difficult question?\

\end{rem}
\begin{prop}\label{p2}
Suppose that $ U$ is an open subset of $ X. $\ If $ f $ is compact and takes $ U $-bounded weakly $ p $-Cauchy sequences into weakly Cauchy sequences, then $ f \in C_{wsc}^{p}(U, Y ). $ 
\end{prop}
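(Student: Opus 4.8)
The plan is to combine the two hypotheses---compactness and the weak-Cauchy-to-weak-Cauchy property---through the elementary observation that a weakly Cauchy sequence lying in a relatively norm-compact set must be norm convergent.

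First I would take an arbitrary $U$-bounded weakly $p$-Cauchy sequence $(x_{n})_{n}$ in $U$ and set $y_{n}:=f(x_{n})$. By the hypothesis on $f$, the sequence $(y_{n})_{n}$ is weakly Cauchy in $Y$; that is, $(y^{\ast}(y_{n}))_{n}$ is a convergent scalar sequence for every $y^{\ast}\in Y^{\ast}$. Since $(x_{n})_{n}$ is $U$-bounded and $f$ is compact, the set $\{ y_{n}: n\in\mathbb{N}\}$ is relatively norm compact in $Y$.

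The core of the argument is the following key lemma: a weakly Cauchy sequence $(y_{n})_{n}$ contained in a relatively norm-compact subset of a Banach space is norm convergent. To see this, suppose it is not. Failure of norm convergence means $(y_{n})_{n}$ is not norm Cauchy, so there are $\varepsilon>0$ and subsequences $(y_{n_{k}})_{k}$, $(y_{m_{k}})_{k}$ with $\parallel y_{n_{k}}-y_{m_{k}}\parallel\,\geq\varepsilon$. By relative compactness I may pass to further subsequences converging in norm to points $y$ and $z$ respectively, and then $\parallel y-z\parallel\,\geq\varepsilon$, so $y\neq z$. On the other hand, for each $y^{\ast}\in Y^{\ast}$ the whole sequence $(y^{\ast}(y_{n}))_{n}$ converges, so both subsequences force the same scalar limit, giving $y^{\ast}(y)=y^{\ast}(z)$. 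As this holds for every $y^{\ast}$, the Hahn--Banach theorem yields $y=z$, a contradiction.

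Applying this lemma to $(y_{n})_{n}=(f(x_{n}))_{n}$ shows that $(f(x_{n}))_{n}$ is norm convergent. Since $(x_{n})_{n}$ was an arbitrary $U$-bounded weakly $p$-Cauchy sequence in $U$, this is precisely the assertion that $f\in C_{wsc}^{p}(U,Y)$. I do not expect a serious obstacle here: the only subtlety is the uniqueness-of-limit-points step in the key lemma, which relies on the fact that the norm limit and the weak limit of one and the same norm-convergent subsequence must agree.
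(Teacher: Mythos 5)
Your proof is correct and follows exactly the same route as the paper: the hypotheses give that $(f(x_{n}))_{n}$ is weakly Cauchy and lies in a relatively norm-compact set, and then norm convergence follows. The only difference is that you spell out in full the step the paper dismisses as ``easy to show'' (a weakly Cauchy sequence in a relatively compact set is norm convergent), and your argument for it via subsequences and Hahn--Banach is sound.
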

\begin{proof}
Let $ (x_{n})_{n} $ be a weakly $ p $-Cauchy and $ U$-bounded sequence.\ Then $ ( f (x_{n}))_{n} $ is weakly Cauchy so that, $ \lbrace f(x_{n}) :n\in\mathbb{N} \rbrace $ is a
relatively compact set.\ It is easy to show that this implies the norm convergence of $ ( f (x_{n}))_{n} .$\ Hence, $ f \in C_{wsc}^{p}(U, Y ). $ 
\end{proof}

\begin{prop}\label{p3}
Let $ U $ be an open convex subset of $ X$ and let $ f \in C^{1u}(U, Y ). $\ If $ f^{\prime} \in C^{p}_{wsc} (U, C_{p}(X,Y)),$ then $ f\in C_{wsc}^{p} (U,Y).$
\end{prop}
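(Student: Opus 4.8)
The plan is to show that $(f(x_n))_n$ is norm convergent by showing it is norm Cauchy, and to argue by contradiction. So let $(x_n)_n$ be a weakly $p$-Cauchy, $U$-bounded sequence in $U$, and suppose $(f(x_n))_n$ is not Cauchy; then there are $\varepsilon>0$ and increasing sequences $(m_k)_k,(n_k)_k$ with $\Vert f(x_{m_k})-f(x_{n_k})\Vert\ge\varepsilon$ for every $k$. First I would observe that, since $U$ is convex and $\{x_n:n\}$ is $U$-bounded, the set $\{x:\operatorname{dist}(x,X\setminus U)\ge\delta\}$ is convex (for the relevant $\delta>0$), so the union $B$ of all the segments $I(x_{n_k},x_{m_k})$ is again $U$-bounded; let $M$ bound its diameter. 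Because $f\in C^{1u}(U,Y)$ its derivative is continuous, so the fundamental theorem of calculus along each segment gives
$$ f(x_{m_k})-f(x_{n_k})=\int_0^1 f'(z_{k,t})(x_{m_k}-x_{n_k})\,dt,\qquad z_{k,t}:=(1-t)x_{n_k}+t\,x_{m_k}. $$

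Next I would discretise the integral so as to exploit the uniform continuity of $f'$ on the $U$-bounded set $B$. Fix $\eta>0$ and choose $\gamma>0$ with $\Vert f'(u)-f'(v)\Vert<\eta$ whenever $u,v\in B$ and $\Vert u-v\Vert<\gamma$. Pick $N\in\mathbb{N}$, depending only on $M$ and $\gamma$ (hence independent of $k$), with $M/N<\gamma$, and put $t_j=j/N$. Since $\Vert z_{k,t}-z_{k,t_j}\Vert=(t-t_j)\Vert x_{m_k}-x_{n_k}\Vert\le M/N<\gamma$ for $t\in[t_j,t_{j+1}]$, replacing $f'(z_{k,t})$ by $f'(z_{k,t_j})$ on each subinterval gives the Riemann-sum estimate, valid uniformly in $k$,
$$ \Big\Vert f(x_{m_k})-f(x_{n_k})-\tfrac1N\sum_{j=0}^{N-1} f'(z_{k,t_j})(x_{m_k}-x_{n_k})\Big\Vert\le \eta M. $$

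The crucial point is then to handle, for each fixed $j$, the sequence $(z_{k,t_j})_k=((1-t_j)x_{n_k}+t_j x_{m_k})_k$. Being a linear combination of the two weakly $p$-Cauchy sequences $(x_{n_k})_k$ and $(x_{m_k})_k$ (subsequences of $(x_n)_n$ are weakly $p$-Cauchy, and a sum of weakly $p$-summable sequences is weakly $p$-summable), it is itself weakly $p$-Cauchy, and it is $U$-bounded. Hence the hypothesis $f'\in C^{p}_{wsc}(U,C_p(X,Y))$ applies and yields operators $S_j\in C_p(X,Y)$ with $\Vert f'(z_{k,t_j})-S_j\Vert\to0$ as $k\to\infty$. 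Writing $f'(z_{k,t_j})(x_{m_k}-x_{n_k})=[f'(z_{k,t_j})-S_j](x_{m_k}-x_{n_k})+S_j(x_{m_k}-x_{n_k})$, the first summand tends to $0$ since $\Vert x_{m_k}-x_{n_k}\Vert\le M$, and the second tends to $0$ since $S_j$ is $p$-convergent while $(x_{m_k}-x_{n_k})_k$ is weakly $p$-summable. As there are only finitely many $j$, the Riemann sum tends to $0$ in $k$, so the displayed estimate gives $\limsup_k\Vert f(x_{m_k})-f(x_{n_k})\Vert\le\eta M$; letting $\eta\downarrow 0$ forces $\Vert f(x_{m_k})-f(x_{n_k})\Vert\to0$, contradicting the choice $\ge\varepsilon$, and therefore $f\in C^{p}_{wsc}(U,Y)$.

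I expect the main obstacle to be exactly the step of controlling $f'$ along the whole segment: the endpoints $x_{n_k}$ and $x_{m_k}$ are only weakly $p$-summable apart, not norm-close, so uniform continuity cannot be invoked in one shot to compare $f'(z_{k,t})$ with $f'$ at an endpoint. The device that resolves this is the partition of $[0,1]$ into a number $N$ of pieces that depends only on the diameter bound $M$ and the modulus of uniform continuity (never on $k$), which reduces the estimate to finitely many limits, each of which is dispatched by combining the $C^{p}_{wsc}$ hypothesis on $f'$ with the defining property of $p$-convergent operators. The only routine verifications left are the convexity/$U$-boundedness of $B$ and the standard fact that $C_p(X,Y)$ is norm-closed in $L(X,Y)$, ensuring each limit $S_j$ is again $p$-convergent.
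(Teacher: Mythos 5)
Your proof is correct, but it follows a genuinely different route from the paper's. The paper's proof applies the Mean Value Theorem to produce, for each pair $(n,m)$, a single point $c_{n,m}\in I(x_{n},x_{m})$ with $\Vert f(x_{n})-f(x_{m})\Vert\leq\Vert f'(c_{n,m})(x_{n}-x_{m})\Vert$, asserts that the doubly indexed family $(c_{n,m})$ is $U$-bounded and weakly $p$-Cauchy, applies the hypothesis to obtain a norm limit $T\in C_{p}(X,Y)$ of $(f'(c_{n,m}))$, and then uses exactly your final decomposition $f'(c_{n,m})(x_{n}-x_{m})=[f'(c_{n,m})-T](x_{n}-x_{m})+T(x_{n}-x_{m})$. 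The delicate point in that argument is the one you flagged as the main obstacle: the MVT points sit at uncontrolled barycentric coordinates $\lambda_{n,m}$ along the segments, so their weak $p$-Cauchyness (which requires $\ell_{p}$-summability of differences, not merely weak Cauchyness) is asserted rather than proved, and it is not evident, precisely because the $\lambda_{n,m}$ may oscillate. Your Riemann-sum device circumvents this: by trading the single MVT point for the points $z_{k,t_{j}}$ at \emph{fixed} coordinates $t_{j}$, you only ever apply the hypothesis to sequences that are fixed convex combinations of subsequences of $(x_{n})$, whose weak $p$-Cauchyness follows directly from the vector-space structure of weakly $p$-summable sequences. The price is the need to control the integral remainder uniformly in $k$, which is where you make essential use of the uniform continuity of $f'$ on $U$-bounded sets; the paper's shorter argument never invokes the $C^{1u}$ hypothesis beyond differentiability. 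In short, the two proofs share the same closing decomposition (kill one term by norm convergence of derivatives, the other by $p$-convergence of the limit operator), but yours replaces the paper's unjustified step by a discretization that can be fully verified, and your remaining supporting claims (convexity of $\lbrace x:\mathrm{dist}(x,X\setminus U)\geq d\rbrace$, norm-closedness of $C_{p}(X,Y)$ in $L(X,Y)$, the Bochner-integral mean value formula for $C^{1}$ maps) are standard and correctly used.
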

\begin{proof}
Let $ (x_{n})_{n} $ be a $ U $-bounded weakly $ p $-Cauchy sequence.\ By the Mean Value Theorem {\rm (\cite[Theorem 6.4]{ch})}, we have
$$ \parallel f(x_{n})-f(x_{m}) \parallel ~\leq ~\parallel f^{\prime}(c_{n,m}) (x_{n}-x_{m})\parallel$$
for some $ c_{n,m} \in I(x_{n}, x_{m}). $\ Since the sequence $ (c_{n,m}) $ is $ U$-bounded and weakly $ p $-Cauchy, by the hypothesis the sequence $ (f^{\prime}(c_{n,m})) $ norm
converges
to some $ T \in C_{p}(X, Y ). $\ Therefore we have:\\ $$ \displaystyle\lim_{n,m\rightarrow \infty} \parallel f^{\prime}(c_{n,m}) (x_{n}-x_{m})-T(x_{n}-x_{m})\parallel+T(x_{n}-x_{m}) \parallel $$\\ $$ \leq \lim_{n,m\rightarrow \infty} \parallel f^{\prime}(c_{n,m}) (x_{n}-x_{m})-T(x_{n}-x_{m})\parallel+\lim_{n,m\rightarrow\infty}\parallel T(x_{n}-x_{m}) \parallel=0.$$
Hence,
$ \displaystyle\lim_{n,m\rightarrow \infty}\parallel f^{\prime}(c_{n,m}) (x_{n}-x_{m})\parallel=0.$
\end{proof}
\begin{lem}\label{l1}
If $ X $ does not have the $ p $-Schur property, then there is a non $ p $-convergent operator $ T:X\rightarrow L_{\infty}[0,1].$
\end{lem}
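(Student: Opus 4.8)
The plan is to extract a witnessing sequence from the failure of the $p$-Schur property, convert the associated separating functionals into a single operator landing in $\ell_{\infty}$, and then push that operator into $L_{\infty}[0,1]$ via an isometric embedding.

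First I would unwind the hypothesis. Since $X$ does not have the $p$-Schur property, the identity operator on $X$ is not $p$-convergent, so by definition there is a weakly $p$-summable sequence $(x_{n})_{n}$ in $X$ that is not norm null. Passing to a subsequence (which is again weakly $p$-summable, as any subsequence of an $\ell_{p}$ scalar sequence lies in $\ell_{p}$), I may assume $\Vert x_{n}\Vert \geq \delta$ for all $n$ and some $\delta > 0$. By the Hahn--Banach theorem I then choose, for each $n$, a functional $x_{n}^{\ast}\in X^{\ast}$ with $\Vert x_{n}^{\ast}\Vert = 1$ and $x_{n}^{\ast}(x_{n}) = \Vert x_{n}\Vert$.

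Next I would define $S:X\rightarrow \ell_{\infty}$ by $S(x) = (x_{n}^{\ast}(x))_{n}$. Because $\Vert S(x)\Vert_{\infty} = \sup_{n}\vert x_{n}^{\ast}(x)\vert \leq \Vert x\Vert$, the map $S$ is a well-defined operator of norm at most one. The key observation is that $S$ is not $p$-convergent: the sequence $(x_{n})_{n}$ is weakly $p$-summable, yet $\Vert S(x_{m})\Vert_{\infty} \geq \vert x_{m}^{\ast}(x_{m})\vert = \Vert x_{m}\Vert \geq \delta$ for every $m$, so $(S(x_{m}))_{m}$ does not converge to zero in norm.

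Finally, to obtain a map with values in $L_{\infty}[0,1]$ rather than $\ell_{\infty}$, I would fix a partition $[0,1] = \bigcup_{n} A_{n}$ into pairwise disjoint measurable sets of positive measure and let $\iota:\ell_{\infty}\rightarrow L_{\infty}[0,1]$ send $(a_{n})_{n}$ to $\sum_{n} a_{n}\chi_{A_{n}}$; since this function equals $a_{n}$ on the positive-measure set $A_{n}$, one has $\Vert \iota((a_{n})_{n})\Vert_{\infty} = \sup_{n}\vert a_{n}\vert$, so $\iota$ is an isometric embedding. Then $T:=\iota\circ S:X\rightarrow L_{\infty}[0,1]$ satisfies $\Vert T(x_{m})\Vert = \Vert S(x_{m})\Vert_{\infty}\geq \delta$, and hence $T$ is a non $p$-convergent operator, as required. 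The only mildly delicate point is verifying the isometric embedding of $\ell_{\infty}$ into $L_{\infty}[0,1]$; the rest follows directly from the definitions, and the preservation of weak $p$-summability needs no argument since the domain sequence $(x_{n})_{n}$ is never altered.
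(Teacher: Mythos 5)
Your proof is correct, but it follows a genuinely different and in fact more elementary route than the paper. The paper splits into cases: for separable $X$ it invokes the classical isometric embedding $X\hookrightarrow \ell_{\infty}$ (norming functionals of a dense sequence), for non-separable $X$ it passes to a separable subspace $Z$ without the $p$-Schur property and extends the embedding $Z\hookrightarrow\ell_{\infty}$ to all of $X$ using the injectivity of $\ell_{\infty}$, and finally it composes with the (nontrivial) isomorphism $\ell_{\infty}\cong L_{\infty}[0,1]$. You avoid all three ingredients: by applying Hahn--Banach only along the witnessing weakly $p$-summable, non-norm-null sequence $(x_{n})_{n}$, you build an operator $S(x)=(x_{n}^{\ast}(x))_{n}$ into $\ell_{\infty}$ that is not $p$-convergent without needing it to be an embedding of $X$ (or of any subspace), which eliminates the separable/non-separable dichotomy and the appeal to injectivity; and you replace the isomorphism $\ell_{\infty}\cong L_{\infty}[0,1]$ by the explicit isometric embedding $(a_{n})_{n}\mapsto\sum_{n}a_{n}\chi_{A_{n}}$ over a measurable partition of $[0,1]$ into sets of positive measure, correctly observing that an embedding of the target, rather than an isomorphism, suffices for transporting non-$p$-convergence. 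What the paper's argument buys is extra structure --- its operator restricts to an isometry on a whole (sub)space, which can be reused elsewhere --- while yours buys self-containedness: every step reduces to Hahn--Banach and a direct computation of an essential supremum, with no citation of embedding, injectivity, or isomorphism theorems. All the individual verifications in your write-up (subsequences of weakly $p$-summable sequences remain weakly $p$-summable, $\Vert S\Vert\leq 1$, $\Vert\iota((a_{n})_{n})\Vert_{\infty}=\sup_{n}\vert a_{n}\vert$ because each $A_{n}$ has positive measure) are sound.
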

\begin{proof}
If X is separable, then by {\rm (\cite[Theorem 2.5.7]{AlbKal})} $ X $ embeds isometrically into $ \ell_{\infty}.$\ 
Hence,
there is an into isometry $ T:X\rightarrow \ell_{\infty} $ which is obviously not $ p $-convergent.
If $ X $ is not separable, it is easy to show that it has a separable subspace $ Z $ without the $ p $-Schur property.\ Hence, there is
an into isometry $ T_{1} \in L(Z, \ell_{\infty}) $ and thus using the fact that $ \ell_{\infty} $ is injective we can
find a bounded linear operator $ T:X\rightarrow \ell_{\infty} $ which extends the operator $ T_{1} .$\ It is clear that $ T $ is not a $ p $-convergent operator.\ Since $ \ell_{\infty} $ is isomorphic to $ L_{\infty}[0, 1] $ (see {\rm (\cite[Theorem 4.3.10]{AlbKal})} ), we obtain a non $p $-convergent operator $ T :X\rightarrow L_{\infty}[0, 1]. $\
\end{proof}
The proof of following lemma with some minor modifications is similar to the
proof of {\rm (\cite[Theorem 2.3]{AHV})}.\ Hence, we omit its proof. 
\begin{lem}\label{l2}
A polynomial is weakly $ p $-sequentially continuous if and only if it is $ p$-convergent.
\end{lem}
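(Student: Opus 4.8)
The plan is to prove both implications, disposing of the easy direction first and isolating a single multilinear estimate as the crux. Throughout, write $P\in P(^{k}X,Y)$ with associated symmetric $k$-linear map $\hat{P}$, and recall that every weakly $p$-summable sequence is weakly $p$-Cauchy and bounded, and that $P(0)=0$ for $k\geq 1$; since a polynomial is defined on all of $X$, the relevant $U$ is $X$ itself.

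For the forward implication, suppose $P\in C^{p}_{wsc}(X,Y)$ and let $(x_{n})_{n}$ be weakly $p$-summable. First I would interlace $(x_{n})$ with zeros, setting $w_{2n-1}=x_{n}$ and $w_{2n}=0$. A short check against the definition (any subsequence of a weakly $p$-summable sequence, padded with zeros, is again weakly $p$-summable) shows that $(w_{n})_{n}$ is weakly $p$-Cauchy and bounded. By hypothesis $(P(w_{n}))_{n}$ is norm convergent; since its even-indexed subsequence is constantly $P(0)=0$, the limit must be $0$, whence $\Vert P(x_{n})\Vert=\Vert P(w_{2n-1})\Vert\to 0$. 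Thus $P$ is $p$-convergent.

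For the converse, suppose $P$ is $p$-convergent and let $(x_{n})_{n}$ be bounded and weakly $p$-Cauchy; I would show $(P(x_{n}))_{n}$ is norm Cauchy. Arguing by contradiction, I extract increasing $(m_{k}),(n_{k})$ and $\varepsilon>0$ with $\Vert P(x_{m_{k}})-P(x_{n_{k}})\Vert\geq\varepsilon$, and set $u_{k}=x_{m_{k}}$, $v_{k}=x_{n_{k}}$, so $d_{k}=u_{k}-v_{k}$ is weakly $p$-summable while $(u_{k}),(v_{k})$ remain bounded. The telescoping identity
$$ P(u_{k})-P(v_{k})=\sum_{j=1}^{k}\hat{P}(\underbrace{u_{k},\dots,u_{k}}_{j-1},\,d_{k},\,\underbrace{v_{k},\dots,v_{k}}_{k-j}) $$
writes the difference as a sum of symmetric multilinear terms each carrying exactly one $d_{k}$. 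Applying polarization in the $k-1$ bounded slots reduces every such term to a fixed linear combination of quantities $\hat{P}(d_{k},s_{k},\dots,s_{k})$, where $s_{k}$ is a bounded $\pm$-combination of $u_{k}$ and $v_{k}$. Hence it suffices to prove the key estimate: whenever $(d_{k})_{k}$ is weakly $p$-summable and $(s_{k})_{k}$ is bounded, $\Vert\hat{P}(d_{k},s_{k},\dots,s_{k})\Vert\to 0$; equivalently, the linear operator $\Phi:X\to P(^{k-1}X,Y)$ given by $\Phi(x)(y)=\hat{P}(x,y,\dots,y)$ is $p$-convergent. Granting this, every telescoping term vanishes, $\Vert P(u_{k})-P(v_{k})\Vert\to 0$ contradicts the choice of $\varepsilon$, and so $(P(x_{n}))_{n}$ converges and $P\in C^{p}_{wsc}(X,Y)$.

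The main obstacle is precisely this key estimate, which is the content imported from the AHV-type argument. It does not follow from polarization on the diagonal alone, because the bounded slots $(s_{k})$ do not tend to zero, so one cannot feed a weakly $p$-summable sequence directly into the $p$-convergence of $P$. I would establish it by one of two routes. First, by symmetry of $\hat{P}$ the $p$-convergence of $\Phi$ is the same as ``$P^{\prime}$ sends $B_{X}$ to a uniformly $p$-convergent subset of $L(X,Y)$'', which by the remark following Definition~\ref{d1} and \cite[Corollary 9]{ce} is tied to the $p$-convergence of $P$; if that equivalence is cited in the form needed, the estimate is immediate. Alternatively, self-containedly, I would argue by induction on the degree $k$: the base case $k=1$ holds because a $p$-convergent operator sends the differences $x_{m_{k}}-x_{n_{k}}$ to norm null, and for the inductive step I would extract the coefficient of $t$ in $P(t\,d_{k}+s_{k})$ by a finite-difference combination and run a gliding-hump construction, splicing the witnessing vectors of a hypothetical failure into a single weakly $p$-summable sequence whose $P$-image stays bounded away from $0$, contradicting $p$-convergence.
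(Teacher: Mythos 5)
Your easy direction and your reduction of the hard direction are both sound: the interlacing-with-zeros argument is complete, and the telescoping identity plus polarization correctly reduce the converse to the single ``key estimate'' that $\hat{P}(d_k,s_k,\dots,s_k)\to 0$ whenever $(d_k)$ is weakly $p$-summable and $(s_k)$ is bounded, i.e.\ that $\Phi\colon X\to P(^{k-1}X,Y)$, $\Phi(x)(y)=\hat{P}(x,y,\dots,y)$, is $p$-convergent. The trouble is that this estimate is not a technical step to be quoted or sketched away: it \emph{is} the entire content of the lemma. Indeed the paper prints no argument at all -- it defers the whole proof to \cite[Theorem 2.3]{AHV} ``with minor modifications'' -- and what that theorem supplies is precisely the estimate you isolate. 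Neither of your two routes delivers it. Route 1 fails as stated: the equivalence recorded in Remark 3.1(ii), via \cite[Corollary 9]{ce}, relates two derivative-type conditions to each other (membership of $\overline{P}$ in $C_p$ versus $P'$ carrying bounded sets into uniformly $p$-convergent sets); neither side of that equivalence is your actual hypothesis that $P$ itself is $p$-convergent, so it cannot convert what you have into what you need -- and it concerns scalar-valued polynomials on complex spaces besides.

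Route 2 founders on exactly the obstruction you yourself flag. The finite-difference extraction gives $k\hat{P}(d_k,s_k,\dots,s_k)=\sum_l c_l P(s_k+t_l d_k)$, but the points $s_k+t_l d_k$ are merely bounded -- they are not weakly null, since $(s_k)$ need not tend weakly to $0$ -- so no splicing or gliding-hump construction can assemble them into a weakly $p$-summable sequence, and the $p$-convergence of $P$ can never be applied to them. An induction on the degree does not rescue this: to kill the cross terms $\hat{P}(d_k^{(i)},s_k^{(k-i)})$ you would need to know how $P$ acts on weakly $p$-Cauchy sequences such as $(s_k+t d_k)$, which is the conclusion of the lemma, so the induction is circular. (A minor further point: you read ``$p$-convergent'' as ``maps weakly $p$-summable sequences to norm null,'' whereas the paper's Definition \ref{d3} and its use of the lemma in Theorem \ref{t4} require convergence $P(x_n)\to P(x)$ at every point $x$; your interlacing trick handles this discrepancy, but passing from the ``at $0$'' version to the ``at every $x$'' version is again an instance of the same unproved estimate, with constant $s_k=x$.) In short, your proposal reproduces the routine outer layer of the Aron--Herv\'es--Valdivia argument but leaves its core as a conditional citation plus an unworkable sketch, so there is a genuine gap at the decisive step.
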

A Banach space $ X$ is
said to have the Dunford-Pettis-property of order $ p $ (or briefly, $ X$ has the $ (DPP_{P}) $), 
if the inclusion $ W(X, Y ) \subseteq C_{p}(X, Y ) $ holds for all Banach spaces $ Y  $ \cite{cs}.
\begin{thm}\label{t4}
Let $ U $ be an open convex subset of $ X. $\ If for all $ k \in \mathbb{N} ~(k\geq2),$ every Banach space $ Y, $ and every weakly $ p $-sequentially continuous polynomial $ P \in P(^{k}X, Y ), $ the derivative
$ P^{\prime} \in P( ^{k-1}X,L(X, Y )) $ is weakly $ p $-sequentially continuous, then either $ X $ has the $ p $-Schur property or it
contains no copy of $ \ell_{1}. $
\end{thm}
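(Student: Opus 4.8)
The plan is to argue by contraposition: assuming that $X$ fails the $p$-Schur property and yet contains a copy of $\ell_{1}$, I will manufacture, for $k=2$ and a suitable target space $Y$, a weakly $p$-sequentially continuous polynomial $P\in P(^{2}X,Y)$ whose derivative $P'\in P(^{1}X,L(X,Y))=L(X,L(X,Y))$ is not weakly $p$-sequentially continuous; this contradicts the hypothesis and forces the stated dichotomy. The two tools are Lemma~\ref{l1}, which turns the failure of the $p$-Schur property into a concrete non-$p$-convergent operator $T\colon X\to L_{\infty}[0,1]$ (so there are a weakly $p$-summable sequence $(x_{n})$ and $\delta>0$ with $\|Tx_{n}\|\ge\delta$), and Lemma~\ref{l2}, which lets me read ``weakly $p$-sequentially continuous'' as ``$p$-convergent'' at the level of $P$ and, since $P'$ has degree $k-1=1$, at the level of the operator $P'$ as well.

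First I would fix the shape of the example. Writing $\hat B\colon X\times X\to Y$ for the symmetric bilinear map associated with $P$, one has $P(x)=\hat B(x,x)$ and $P'(x)(h)=2\hat B(x,h)$, so the asymmetry I want to exploit is that $P$ only ever pairs a weakly $p$-summable sequence with itself, whereas $P'$ pairs it against an arbitrary unit vector $h$, in particular against the (non weakly $p$-summable) unit vectors coming from the copy of $\ell_{1}$. Concretely I would take $P(x)=\langle Tx,Ax\rangle$ for an operator $A\colon X\to\ell_{1}$ built from the $\ell_{1}$-copy, so that $P'(x)(h)=\langle Th,Ax\rangle+\langle Tx,Ah\rangle$.

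The verification then splits into two halves. For weak $p$-sequential continuity of $P$ I would use that $\ell_{1}$ has the $p$-Schur property for every $p$ (a weakly $p$-summable sequence in $\ell_{1}$ is weakly null, hence norm null by Schur), so $A$ is automatically $p$-convergent; consequently, for any weakly $p$-Cauchy bounded $(u_{n})$ one has $Au_{n}\to v$ in $\ell_{1}$, and splitting $\langle Tu_{n},Au_{n}\rangle=\langle Tu_{n},Au_{n}-v\rangle+\langle Tu_{n},v\rangle$ shows the first term tends to $0$ (bounded times norm-null) while the second converges by dominated convergence (coordinatewise limits against $v\in\ell_{1}$). Hence $P(u_{n})$ converges and $P$ is weakly $p$-sequentially continuous. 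For the failure of $P'$, again because $A$ is $p$-convergent the term $\langle Th,Ax_{n}\rangle$ is uniformly small in $h$, so $\|P'(x_{n})\|=\sup_{\|h\|\le1}|\langle Tx_{n},Ah\rangle|+o(1)=\|A^{\ast}(Tx_{n})\|_{X^{\ast}}+o(1)$; the point is to keep this $\ge c>0$. This is where the $\ell_{1}$-copy must be spent: $A$ must ``see'' the moving bumps $Tx_{n}$ — note that $Tx_{n}\to0$ coordinatewise (since $(x_{n})$ is weakly null), so the large coordinates of $Tx_{n}$ escape to infinity — i.e. $A$ should carry the $\ell_{1}$-basis $(h_{n})$ onto an approximation of the $\ell_{1}$ unit vectors indexing those bumps.

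I expect the crux to be exactly this last point: producing a bounded operator $A\colon X\to\ell_{1}$ that detects the copy of $\ell_{1}$ strongly enough to keep $\|A^{\ast}(Tx_{n})\|$ bounded below. Asking $A$ to send the $\ell_{1}$-basis to the canonical $\ell_{1}$-vectors amounts to a bounded projection onto the copy of $\ell_{1}$, equivalently to the weak unconditional Cauchy summability in $X^{\ast}$ of the (extended) biorthogonal functionals, and a copy of $\ell_{1}$ need not be complemented. Reconciling the boundedness of $A$ into $\ell_{1}$ with the non-vanishing of the mixed term $\langle Tx_{n},Ah_{n}\rangle$ — while keeping weak $p$-sequential continuity of $P$ intact for all weakly $p$-Cauchy (not merely weakly $p$-summable) sequences — is the main technical obstacle. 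It is precisely here, in the gap between weakly $p$-Cauchy and weakly $p$-summable behaviour, that the hypothesis $\ell_{1}\subseteq X$ (as opposed to the hypothesis that $X$ lacks the $p$-Schur property) is genuinely used, and it is presumably where the injectivity of the target $L_{\infty}[0,1]$ furnished by Lemma~\ref{l1} is meant to be exploited in order to carry out the relevant extension.
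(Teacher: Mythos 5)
Your overall strategy (contradiction, a degree-$2$ product-type polynomial built from the non-$p$-convergent operator $T$ of Lemma~\ref{l1}, with Lemma~\ref{l2} translating weak $p$-sequential continuity into $p$-convergence) is the same as the paper's, and your verification that $P(x)=\langle Tx,Ax\rangle$ would be weakly $p$-sequentially continuous for \emph{any} bounded $A:X\rightarrow \ell_{1}$ is essentially correct (since $\ell_{1}$ has the Schur property, $A$ is automatically $p$-convergent). But the proof has a genuine gap exactly where you flag it, and it is not a repairable technicality of your setup: you need a bounded operator $A:X\rightarrow\ell_{1}$ with $\Vert A^{\ast}(Tx_{n})\Vert_{X^{\ast}}$ bounded below along the "escaping bumps" $Tx_{n}$, and, as you yourself observe, the natural way to get this amounts to a bounded projection of $X$ onto the copy of $\ell_{1}$. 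Such projections need not exist (copies of $\ell_{1}$ are in general uncomplemented, e.g.\ in $C[0,1]$), and the injectivity of $\ell_{\infty}\cong L_{\infty}[0,1]$ from Lemma~\ref{l1} cannot rescue this: injectivity extends operators \emph{into} $\ell_{\infty}$ defined on subspaces, whereas you would need to extend (or project) an $\ell_{1}$-valued map \emph{out of} the subspace $\ell_{1}\subseteq X$, and there is no extension principle in that direction. So the crux of the theorem --- how the hypothesis $\ell_{1}\subseteq X$ is actually spent --- remains unproved in your proposal.

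The paper uses the copy of $\ell_{1}$ in the opposite direction, and this is the idea you are missing. From $\ell_{1}\subseteq X$ one obtains (via Corollary 4.16 and Theorem 2.17 of \cite{djt}) a \emph{surjective} operator $S:X\rightarrow \ell_{2}\cong L_{2}[0,1]$ which is moreover $p$-convergent; the mechanism is that the canonical surjection $\ell_{1}\rightarrow\ell_{2}$ is absolutely summing (Grothendieck), summing operators into a Hilbert space extend to any superspace, the extension is still summing (hence completely continuous, hence $p$-convergent for every $p$), and it is still surjective because it extends a surjection. No complementation of the $\ell_{1}$-copy is ever needed. The counterexample is then the pointwise product $P(x)=S(x)\,T(x)^{k-1}$ in $L_{2}[0,1]$, with $T:X\rightarrow L_{\infty}[0,1]$ from Lemma~\ref{l1}: weak $p$-sequential continuity of $P$ follows from $p$-convergence of $S$ together with the $DPP_{p}$ of $L_{\infty}[0,1]$ (and Lemma~\ref{l2}), while in $P^{\prime}(x_{n})(h)=S(h)T(x_{n})^{k-1}+\cdots$ the terms containing $S(x_{n})$ vanish in norm and the surjectivity of $S$ --- via the open mapping theorem, $S(B_{X})\supseteq c\,B_{L_{2}}$ --- lets one choose $h\in B_{X}$ with $S(h)$ concentrated on the set where $\vert T(x_{n})\vert\geq\delta$, so that $\Vert P^{\prime}(x_{n})\Vert$ stays bounded below although $(x_{n})$ is weakly $p$-summable. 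That surjectivity bound is precisely the substitute for the lower bound on $\Vert A^{\ast}(Tx_{n})\Vert$ that your approach could not produce.
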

\begin{proof}
Assume on the contrary that $ X $ contains a copy of $ \ell_{1}$ and does not have the $ p $-Schur property.\ Therefore, by Corollary 4.16 and Theorem 2.17 of \cite{djt}, there is a $ p $-convergent operator from $ X $ onto $ \ell_{2} .$\ Since $ \ell_{2} $ is isometrically
isomorphic to $ L_{2}[0, 1] $, we obtain a surjective operator $ S \in C_{p}(X, L_{2}[0, 1]) .$\ On the other hand, by Lemma \ref{l1}, there is a non $ p $-convergent operator $ T:X\rightarrow L_{\infty}[0,1].$\ Now, let $ k\geq 2 $ be the integer.\ Define a $ k $-linear mapping $ L:X^{k} \rightarrow L_{2}[0,1]$ by\\

$ L(x_{1},\cdot\cdot\cdot,x_{k}) =\frac{1}{k}[S(x_{1})T(x_{2})\cdot\cdot\cdot T(x_{k}) +\cdot\cdot\cdot +T(x_{1})T(x_{2})\cdot\cdot\cdot T(x_{k-1})S(x_{k})]$ for $ x_{1},\cdot\cdot\cdot, x_{k} \in X.$\
Obviously, $ L$ is well defined, symmetric, and continuous.\ For each $ x\in X, $  consider the associated polynomial
$ P(x)=S(x) T(x)^{k-1}.$\ We claim that $ P $ is weakly $ p $-sequentially continuous.\ For this purpose,
suppose that $ (x_{n}-x)_{n} $ is a 
weakly $p $-summable sequence in $ B_{X}. $\ Hence, we have:
\begin{eqnarray*}
 \parallel P(x_{n}) -P(x)\parallel= \parallel S(x_{n}) T(x_{n})^{k-1} - S(x) T(x)^{k-1} \parallel &\leq&\\ \parallel S(x_{n}) T(x_{n})^{k-1} - S(x) T(x_{n})^{k-1} \parallel +\parallel S(x) T(x_{n})^{k-1} - S(x) T(x)^{k-1} \parallel &= &\\ ( \int_{0}^{1} \vert S(x_{n})(t) -S(x)(t) \vert^{2} \vert T(x_{n}) (t)\vert^{2k-2} dt)^{\frac{1}{2}} &+ &\\ ( \int_{0}^{1} \vert S(x)(t) \vert^{2} \vert T(x_{n}) (t)^{k-1} -T(x)(t)^{k-1}\vert^{2} dt )^{\frac{1}{2}} &\leq &\\ \parallel T \parallel^{k-1} ( \int _{0}^{1} \vert S(x_{n}-x)(t)\vert^{2} dt)^{\frac{1}{2}}+ \langle S(x)^{2}, [T(x_{n}) (t)^{k-1} -T(x)(t)^{k-1}]^{2} \rangle^{\frac{1}{2}} &= &\\ \parallel T \parallel^{k-1} \Vert S(x_{n}-x)\Vert_{L_{2}}+\langle S(x)^{2}, [T(x_{n}) (t)^{k-1} -T(x)(t)^{k-1}]^{2} \rangle^{\frac{1}{2}}.
\end{eqnarray*}
Since $ S \in C_{p}(X, L_{2}[0, 1]) ,$ we have 
$\lim_{n\rightarrow \infty}\parallel S(x_{n}-x) \parallel_{L_{2}}=0. $\
Now, for $ m\in \mathbb{N} ~(m\geq 2), $ we consider the polynomial $ \varphi:L_{\infty} [0,1]\rightarrow L_{\infty} [0,1]$ defined by $ \varphi(g):=g^{m}. $\ Since $ L_{\infty} [0,1] $ has the $ (DPP_{p}) ,$
this polynomial takes weakly $ p $-convergent sequences into norm convergent sequences and so, the sequence $ ([T (x_{n})^{k-1}-T (x)^{k-1}]^{2})_{n} $ is weakly null in $ L_{\infty} [0,1] . $\ On the other hand, $ S(x)^{2}\in L_{1}[0,1] .$\ Hence, 
$$ \lim_{n\rightarrow \infty} \langle S(x)^{2}, [T(x_{n}) (t)^{k-1} -T(x)(t)^{k-1}]^{2} \rangle^{\frac{1}{2}} =0.$$
Therefore, $ \displaystyle \lim_{n\rightarrow \infty} \parallel P(x_{n}) -P(x)\parallel=0 $ and so,
Lemma \ref{l2} implies that the polynomial
$ P $ is weakly $ p $-sequentially continuous.\ By using the same
argument as in the proof of 
{\rm (\cite[Theorem 2.9]{ce2})}, $ P^{\prime} $ is not weakly $ p $-sequentially continuous, which is a contradiction.
\end{proof}

By using the same
argument as in the proof of {\rm (\cite[Theorem 2.1]{ce2})}, we obtain the following similar
result.
\begin{thm}\label{t1}
Let $ U\subseteq X $ be an open convex subset and let $ f \in C^{1u}(U, Y ). $\ Then the following assertions are equivalent:\\
$ \rm{(i)} $ $ f\in C_{wsc}^{p} (U,Y);$\\
$ \rm{(ii)} $ For every $ U $-bounded and weakly $ p $-Cauchy sequence $ (x_{n}) $ and every weakly $ p $-Cauchy sequence $ (h_{n}) \subset X, $ the sequence $ (f^{\prime}(x_{n})(h_{n}))_{n} $ norm converges
in $ Y; $\\
$ \rm{(iii)} $ For every $ U $-bounded weakly $ p $-Cauchy sequence $ (x_{n})_{n} $ and every weakly $ p $-summable sequence $ (h_{n})_{n} \subset X, $ we have
$$\lim_{n}\sup_{m}\parallel f^{\prime}(x_{m}) (h_{n}) \parallel =0;$$\
$ \rm{(iv)} $ For every $ U$-bounded weakly $ p $-Cauchy sequence $ (x_{n})_{n} $ and every weakly $ p $-summable sequence $ (h_{n})_{n} \subset X, $ we have
$$\lim_{n} f^{\prime}(x_{n}) (h_{n}) =0;$$
$ \rm{(v)} $ $ f^{\prime} $ takes $ U$-bounded and weakly $ p $-precompact subsets of $ U$ into uniformly $ p $-convergent subsets of $ L(X, Y ). $
\end{thm}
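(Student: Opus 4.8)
The plan is to establish the cycle $(i)\Rightarrow(ii)\Rightarrow(iii)\Rightarrow(iv)\Rightarrow(i)$ together with the separate equivalence $(iii)\Leftrightarrow(v)$; combined, these give all five equivalences, and they parallel the scheme used in \cite[Theorem 2.1]{ce2} for the case $p=\infty$. Before starting I would record three routine stability facts used throughout. (a) Since $U$ is open and convex and a $U$-bounded set $\{x_n\}$ has distance $\rho>0$ to $X\setminus U$, each segment $I(x_n,x_m)$ lies in $U$ and the union of all such segments is again $U$-bounded, because for a convex combination $c=\sum_i\lambda_i x_i$ one has $c+\rho B_X=\sum_i\lambda_i(x_i+\rho B_X)\subseteq U$. (b) The weakly $p$-Cauchy and the weakly $p$-summable sequences each form a class closed under sums, scalar multiples and subsequences; every weakly $p$-summable sequence is weakly $p$-Cauchy (being weakly $p$-convergent to $0$); and $(\lambda_k h_k)$ is weakly $p$-summable whenever $(h_k)$ is and $\lambda_k\in[0,1]$. (c) Repeating or interlacing terms, including inserting zeros, preserves being weakly $p$-Cauchy and being weakly $p$-summable.

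For $(i)\Rightarrow(ii)$ I would fix a $U$-bounded weakly $p$-Cauchy sequence $(x_n)$ and a weakly $p$-Cauchy sequence $(h_n)\subset X$, and approximate $f'(x_n)(h_n)$ by the difference quotients $t^{-1}[f(x_n+th_n)-f(x_n)]$. By the Mean Value Theorem $(\cite[Theorem 6.4]{ch})$ the approximation error is dominated by $\sup_{0\le s\le1}\Vert f'(x_n+sth_n)-f'(x_n)\Vert\,\Vert h_n\Vert$; since $\{x_n+sth_n:n\in\mathbb N,\ 0\le s\le1\}$ is $U$-bounded by fact (a) and $f\in C^{1u}(U,Y)$, the uniform continuity of $f'$ on $U$-bounded sets makes this error uniformly small in $n$ once $t$ is small. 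For each such fixed $t$ the sequences $(x_n+th_n)$ and $(x_n)$ are $U$-bounded and weakly $p$-Cauchy (fact (b)), so by (i) both $(f(x_n+th_n))$ and $(f(x_n))$ norm-converge and the difference quotient is norm-Cauchy; an $\varepsilon/3$ argument then shows $(f'(x_n)(h_n))$ is norm-Cauchy, hence convergent. I expect this to be the main obstacle, as it is the only genuinely analytic step and hinges on the uniform-in-$n$ control furnished by $C^{1u}$.

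The remaining implications are combinatorial extractions. For $(ii)\Rightarrow(iii)$ I argue by contradiction: if (iii) fails for some $U$-bounded weakly $p$-Cauchy $(x_n)$ and weakly $p$-summable $(h_n)$, I get $\varepsilon>0$ and indices with $\Vert f'(x_{m_j})(h_{n_j})\Vert>\varepsilon$, and then feed (ii) the interlaced sequences $\tilde x_{2i-1}=\tilde x_{2i}=x_{m_i}$ and $\tilde h_{2i-1}=h_{n_i},\ \tilde h_{2i}=0$, which are admissible by facts (b),(c); (ii) forces $(f'(\tilde x_k)(\tilde h_k))$ to converge, yet its even terms vanish while its odd terms have norm $>\varepsilon$, a contradiction. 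The step $(iii)\Rightarrow(iv)$ is immediate from $\Vert f'(x_n)(h_n)\Vert\le\sup_m\Vert f'(x_m)(h_n)\Vert$. For $(iv)\Rightarrow(i)$ I again argue by contradiction via the Mean Value Theorem exactly as in the proof of Proposition \ref{p3}: a failure of convergence of $(f(x_n))$ yields increasing $(n_k),(m_k)$ and points $c_k\in I(x_{n_k},x_{m_k})$ with $\Vert f'(c_k)(x_{n_k}-x_{m_k})\Vert>\varepsilon$; here $h_k:=x_{n_k}-x_{m_k}$ is weakly $p$-summable by the very definition of weakly $p$-Cauchy, while $c_k=x_{m_k}+\lambda_k h_k$ is $U$-bounded and weakly $p$-Cauchy by facts (a),(b), so (iv) forces $f'(c_k)(h_k)\to0$, the desired contradiction.

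Finally, for $(iii)\Leftrightarrow(v)$ I would use that (iii) says precisely that $\{f'(x_m):m\in\mathbb N\}$ is a uniformly $p$-convergent subset of $L(X,Y)$ in the sense of Definition \ref{d1}, for every $U$-bounded weakly $p$-Cauchy $(x_m)$. For $(v)\Rightarrow(iii)$ observe that the range $\{x_m:m\in\mathbb N\}$ of such a sequence is a $U$-bounded weakly $p$-precompact set, so (v) gives exactly (iii). For $(iii)\Rightarrow(v)$, let $A$ be $U$-bounded and weakly $p$-precompact and suppose $f'(A)$ is not uniformly $p$-convergent; then some weakly $p$-summable $(h_n)$ and points $a_j\in A$ satisfy $\Vert f'(a_j)(h_{n_j})\Vert>\varepsilon$, and extracting a weakly $p$-Cauchy subsequence of $(a_j)$, possible by weak $p$-precompactness, contradicts (iii). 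Assembling the cycle $(i)\Rightarrow(ii)\Rightarrow(iii)\Rightarrow(iv)\Rightarrow(i)$ with the equivalence $(iii)\Leftrightarrow(v)$ then completes the proof.
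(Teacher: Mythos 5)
Your proposal is correct, and its first half coincides with the paper's proof: the implications $(i)\Rightarrow(ii)\Rightarrow(iii)\Rightarrow(iv)$ are established there by exactly the tools you describe --- the Mean Value Theorem plus uniform continuity of $f'$ on $U$-bounded sets for $(i)\Rightarrow(ii)$ (the paper fixes one small $\delta$ rather than letting $t\to 0$, an immaterial difference), the interlacing-with-zeros trick for $(ii)\Rightarrow(iii)$, and the trivial estimate for $(iii)\Rightarrow(iv)$. Where you genuinely diverge is in closing the loop. The paper proves the single cycle $(i)\Rightarrow(ii)\Rightarrow(iii)\Rightarrow(iv)\Rightarrow(v)\Rightarrow(i)$: it deduces $(v)$ from $(iv)$ by the same near-supremum extraction you use for $(iii)\Rightarrow(v)$, and then recovers $(i)$ from $(v)$ by applying the Mean Value Theorem to the doubly indexed family $c_{i,j}\in I(x_i,x_j)$ and invoking uniform $p$-convergence of $f'(\lbrace c_{i,j}\rbrace)$. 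You instead prove $(iv)\Rightarrow(i)$ directly and attach $(v)$ through the separate equivalence $(iii)\Leftrightarrow(v)$. Your wiring buys something concrete: in your $(iv)\Rightarrow(i)$ the mean-value points $c_k=x_{m_k}+\lambda_k(x_{n_k}-x_{m_k})$ form an honest weakly $p$-Cauchy sequence, because the perturbations $\lambda_k(x_{n_k}-x_{m_k})$ are scaled terms of a weakly $p$-summable sequence, so every hypothesis of $(iv)$ is verifiable from the definitions. The paper's $(v)\Rightarrow(i)$, by contrast, must treat $(c_{i,j})_{i,j}$ as ``a weakly $p$-Cauchy and $U$-bounded sequence'', i.e.\ it implicitly needs the set $\lbrace c_{i,j}\rbrace$ --- which lies only in the convex hull of $\lbrace x_n\rbrace$ --- to be weakly $p$-precompact, a point it glosses over; your route avoids this entirely. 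What the paper's wiring buys is economy: $(v)$ comes for free inside the cycle instead of requiring the (admittedly short) two-way argument you give.

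One blemish you share with the paper: in $(ii)\Rightarrow(iii)$ the indices $m_j$ at which $\sup_m\Vert f'(x_m)(h_{n_j})\Vert$ is nearly attained (or exceeds $\varepsilon$, in your contradiction form) need not be increasing, so $(x_{m_j})_j$ is not literally a subsequence of $(x_n)_n$ and need not be weakly $p$-Cauchy. This is fixable in one line via a dichotomy: either $(m_j)$ admits a strictly increasing subsequence, in which case your interlacing argument applies verbatim, or it admits a constant subsequence $m_j\equiv m^{\ast}$, in which case applying $(ii)$ to the constant sequence $(x_{m^{\ast}})$ together with the zero-interlaced $h$'s yields the same contradiction. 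You should insert this dichotomy; the paper's own proof needs it just as much.
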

\begin{proof}
(i) $ \Rightarrow $ (ii) Suppose that $ (x_{n})_{n} $ is a $ U $-bounded and weakly $ p $-Cauchy sequence.\ Let $ (h_{n})_{n} $ be a weakly $ p $-Cauchy sequence in $ X. $\ Without loss
of generality, we assume that $ (h_{n})_{n}$ is bounded.\ Consider $ B := \lbrace x_{n}: n \in \mathbb{N} \rbrace $ and let $ d := min\lbrace 1, dist(B, \partial U)\rbrace. $\ It is easy to show that the set
$$ B^{\prime}:=B+\frac{d}{2}B_{X}\subset U$$
is also $ U $-bounded.\ Since $ f \in C^{1u}(U, Y ),$ $ f^{\prime} $ is uniformly continuous on $ B^{\prime}.$\ Hence, for given $ \varepsilon >0, $ there exists $ 0 <\delta <\frac{d}{4} $ such that if $t_{1} ,t_{2}\in B^{\prime}$
satisfy $ \parallel t_{1} -t_{2}\parallel <2\delta,$ then
\begin{equation}\label{eq}
\parallel f^{\prime}(t_{1})-f^{\prime}(t_{2}) \parallel <\frac{\varepsilon}{4}.
\end{equation}
If $ c\in I(x_{n},x_{n}+\delta h_{n}) $ for some $ n\in \mathbb{N}, $ then
$$ \parallel c-x_{n} \parallel \leq \delta \parallel h_{n} \parallel <\delta <2\delta <\frac{d}{2},$$
and so,
$$c=x_{n} +(c-x_{n})\in B^{\prime}=B+\frac{d}{2}B_{X} $$
As an immediate consequence of the Mean Value Theorem {\rm (\cite[Theorem 6.4]{ch})}, and formula (\ref{eq}), we obtain
\begin{flushleft}
$ \parallel f^{\prime}(x_{n}) ( \delta h_{n})-f(x_{n}+
\delta h_{n})+f(x_{n})\parallel $
\end{flushleft}
\begin{center}
$ \leq \displaystyle\sup_{c\in I(x_{n},x_{n}+\delta h_{n})} \parallel f^{\prime} (c) - f^{\prime} (x_{n}) \parallel \parallel\delta h_{n}\parallel
\leq\frac{\varepsilon \delta}{4} .$
\end{center}
Similary,
\begin{flushleft}
$ \parallel f(x_{m}+\delta h_{m})-f(x_{m})- f^{\prime}(x_{m}) ( h_{m})\parallel $
\end{flushleft}
\begin{center}
$
\leq \displaystyle\sup_{c\in I(x_{m},x_{m}+\delta h_{m})} \parallel f^{\prime} (c) - f^{\prime} (x_{m}) \parallel \parallel\delta h_{m}\parallel
\leq\frac{\varepsilon \delta}{4}.\
$
\end{center}
In the other word, the sequences $ (x_{n}+\delta h_{n})_{n} $ and $ (x_{n})_{n} $ are $ U$-bounded and weakly $ p $-Cauchy in $ U. $\ Hence, by the hypothesis the
sequences $ ( f (x_{n} +\delta h_{n}))_{n} $ and $ ( f (x_{n}))_{n} $ are norm convergent in $ Y. $\ Hence, we can find $ n_{0}\in \mathbb{N} $ so that for $ n,m > n_{0} :$
\begin{flushleft}
$ \parallel f(x_{n}+\delta h_{n})-f(x_{m}+\delta h_{m})\parallel <\frac{\varepsilon \delta}{4},~~~~~~~~~~~~~ \parallel f(x_{n})-f(x_{m})\parallel <\frac{\varepsilon \delta}{4} $
\end{flushleft}
So, for $ n,m > n_{0} ,$ we have\
$$\parallel f^{\prime}(x_{n}) (h_{n}) - f^{\prime}(x_{m}) (h_{m})\parallel< \varepsilon.$$\
(ii) $ \Rightarrow $ (iii) Let $ (x_{n})_{n} $ be a $ U $-bounded weakly $ p $-Cauchy sequence and let $ (h_{n})_{n} $ be a weakly $ p $-summable sequence in $ X. $\ By the part $ \rm{(ii)} $, for every $ h \in X, $ the set $ \lbrace f^{\prime} (x_{n})(h) :n\in \mathbb{N}\rbrace$ is bounded in $ Y. $\ On the other hand, there exists a subsequence $ (x_{m_{k}}) _{k}$ of $ (x_{m})_{m} $ in $ U $ such that
$$ \parallel f^{\prime} (x_{m_{k}})(h_{k}) \parallel \geq \sup_{m}\parallel f^{\prime} (x_{m})(h_{k}) \parallel-\frac{1}{k} ~~~~(k\in \mathbb{N}).$$
Since the sequences $ (x_{m_{k}}) _{k}$ in $ U $ and $ (h_{1},0,h_{2},0,h_{3},0,\cdot\cdot\cdot) $ in $ X $
are weakly $ p $-Cauchy, the sequence
$$ (f^{\prime}(x_{m_{1}})(h_{1}),0,f^{\prime}(x_{m_{2}})(h_{2}),0, f^{\prime}(x_{m_{3}})(h_{3}),0,\cdot\cdot\cdot) $$
converges in $ Y. $\ Therefore, $ \displaystyle\lim_{k} f^{\prime}(x_{m_{k}})(h_{k})=0.$\ Hence, $ \rm{(ii)} $ implies that
$$ \lim_{k} \sup_{m}\parallel f^{\prime}(x_{m}) (h_{k}) \parallel =0.$$
(iii) $ \Rightarrow $ (iv) is obvious.\\
(iv) $ \Rightarrow $ (v) Let $ K$ be a weakly $ p $-precompact $ U $-bounded set.\ It is clear that, for every $ h \in X, $ the set $ f^{\prime}(K)(h) $
is bounded
in $ Y. $\ Let $ (h_{n}) _{n}$ is a weakly $ p $-summable sequence in $ X. $\
If $ (h_{n_{k}})_{k} $ is a subsequence of $ (h_{n}) _{n},$ then for every $ k\in \mathbb{N}, $ there exists $ a_{k}\in K $ such that
$$ \sup_{a \in K}\parallel f^{\prime}(a) (h_{n_{k}}) \parallel < \parallel f^{\prime}(a_{k}) (h_{n_{k}}) \parallel+\frac{1}{k}. $$
Since $ K $ is a weakly $ p $-precompact set, the sequence $ (a_{k})_{k} $ admits a weakly $ p $-Cauchy subsequence $ (a_{k_{r}}) _{r}.$\ Hence, by the hypothesis, 
$$ \displaystyle\lim_{r} \parallel f^{\prime}(a_{k_{r}}) (h_{n_{k_{r}}}) \parallel=0. $$
Therefore we have,
$ \displaystyle\lim_{r}\sup_{a\in K} \parallel f^{\prime}(a) (h_{n_{k_{r}}}) \parallel=0. $\ Hence, every subsequence of
$ ( \displaystyle \sup_{a \in K} \parallel f^{\prime}(a) (h_{n}) \parallel) _{n}$
has a subsequence converging to $ 0. $\ Therefore, the sequence itself converges to $ 0, $ that is, $ \displaystyle \lim_{n}\sup_{a \in K} \parallel f^{\prime}(a) (h_{n}) \parallel =0.$\\
(v) $ \Rightarrow $ (i) Let $ (x_{n})_{n} $ be a $ U $-bounded and weakly $ p $-Cauchy sequence.\ Since $ U$ is convex, the segment $ I(x_{n}, x_{m}) $ is contained in $ U$
for all $ n,m \in \mathbb{N}. $\ By the Mean Value Theorem {\rm (\cite[Theorem 6.4]{ch})},
there exists $ c_{nm} \in I(x_{n}, x_{m}) $ such that
$$ \parallel f(x_{n}) -f(x_{m}) \parallel \leq \parallel f^{\prime}(c_{n,m}) (x_{n}-x_{m}) \parallel \leq \sup_{i,j \in \mathbb{N}}\parallel f^{\prime}(c_{i,j}) (x_{n}-x_{m}) \parallel .$$
Since $ (c_{i,j})_{i,j} $ is a weakly $ p $-Cauchy and $ U $-bounded sequence, by $ \rm{(v)} $ we have
$$ \lim_{n,m}\sup_{i,j \in \mathbb{N}}\parallel f^{\prime}(c_{i,j}) (x_{n}-x_{m}) \parallel=0. $$
Therefore, $\displaystyle \lim_{n,m\rightarrow\infty}\parallel f(x_{n}) -f(x_{m}) \parallel =0.$
\end{proof}
\begin{rem}\label{r2}
If $ U \subseteq X $ is an open subset, let $ f \in C^{1u}(U, Y ) \cap C^{p}_{wsc}(U, Y ). $ \ Then, by $ \rm{(ii)} $ of Theorem $ \rm{\ref{t1}} $, $ f^{\prime}(U) \subseteq C_{p}(X, Y ). $\
\end{rem}
\begin{thm}\label{t2}
Let $ B_{X} $ be weakly $ p $-precompact and let $ U $ be an open subset of $ X$ and let $ f \in C^{1u}(U, \mathbb{R} )\cap C^{p}_{wsc}(U, \mathbb{R} ). $\ If every uniformly $ p $-convergent set in $ K(X, \mathbb{R} ) $ is uniformly completely continuous, then $ f^{\prime} $ is compact with values in $ C_{p}(X, \mathbb{R} ) .$
\end{thm}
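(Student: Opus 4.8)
The plan is to prove the two assertions separately: that $f'$ takes its values in $C_{p}(X,\mathbb{R})$, and that $f'$ is a compact mapping. The first is immediate, since $U\subseteq X$ is open and $f\in C^{1u}(U,\mathbb{R})\cap C^{p}_{wsc}(U,\mathbb{R})$: part $\rm{(ii)}$ of Theorem \ref{t1} gives $f'(U)\subseteq C_{p}(X,\mathbb{R})$, which is exactly Remark \ref{r2}. Thus everything reduces to showing that $f'$ carries each $U$-bounded subset $K$ of $U$ into a relatively norm-compact subset of $L(X,\mathbb{R})$. It is worth recording at the outset that, the range being $\mathbb{R}$, every bounded functional is compact, so $L(X,\mathbb{R})=K(X,\mathbb{R})=X^{\ast}$, and by part $\rm{(i)}$ of the Remark following Definition \ref{d1} the notions ``uniformly $p$-convergent'' and ``$p$-$(V)$'' coincide on subsets of $X^{\ast}$.

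First I would convert the hypotheses into a statement about $f'(K)$. Because $B_{X}$ is weakly $p$-precompact, and weak $p$-precompactness is inherited by subsets and stable under scaling, every bounded --- hence every $U$-bounded --- subset of $X$ is weakly $p$-precompact. Applying the implication $\rm{(i)}\Rightarrow\rm{(v)}$ of Theorem \ref{t1} to the given $f$, the set $f'(K)$ is then a uniformly $p$-convergent subset of $L(X,\mathbb{R})=X^{\ast}$. Invoking the standing hypothesis that every uniformly $p$-convergent set in $K(X,\mathbb{R})=X^{\ast}$ is uniformly completely continuous, I conclude that $f'(K)$ is a uniformly completely continuous (an $(L)$-) subset of $X^{\ast}$. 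The task thus becomes: show that this bounded, uniformly completely continuous set $f'(K)$ is in fact relatively compact.

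The heart of the matter is this last passage from uniform convergence to genuine norm-compactness, and I would carry it out by contradiction using part $\rm{(ii)}$ of Theorem \ref{t1}. Suppose $f'(K)$ is not relatively compact. Since $f'(K)$ is bounded, there are $\varepsilon>0$ and a sequence $(a_{n})_{n}\subseteq K$ with $\Vert f'(a_{n})-f'(a_{m})\Vert>\varepsilon$ for $n\neq m$; by weak $p$-precompactness of $K$ I may assume $(a_{n})_{n}$ is weakly $p$-Cauchy. For each $k$ pick $h_{k}\in B_{X}$ witnessing $|[f'(a_{2k-1})-f'(a_{2k})](h_{k})|>\varepsilon$, and, using that $B_{X}$ is weakly $p$-precompact, pass to a subsequence (relabelled, with the corresponding pairs $(a_{2k-1},a_{2k})$ retained) so that $(h_{k})_{k}$ is weakly $p$-Cauchy. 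Listing, for $k=1,2,\dots$, the two terms $a_{2k-1},a_{2k}$ produces a subsequence of $(a_{n})_{n}$, hence a $U$-bounded weakly $p$-Cauchy sequence, while listing $h_{k},h_{k}$ for each $k$ produces a doubled sequence that is again weakly $p$-Cauchy. Feeding these two sequences into part $\rm{(ii)}$ of Theorem \ref{t1}, the scalar sequence obtained by evaluating term by term converges, so its consecutive differences $f'(a_{2k-1})(h_{k})-f'(a_{2k})(h_{k})$ tend to $0$, contradicting their lower bound $\varepsilon$. Hence $f'(K)$ is relatively compact, and with the first paragraph this yields that $f'$ is compact with values in $C_{p}(X,\mathbb{R})$.

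The step I expect to be delicate is the claim that doubling (repeating each term twice) a weakly $p$-Cauchy sequence again gives a weakly $p$-Cauchy sequence, which is what legitimizes the pairing in part $\rm{(ii)}$; the point is that along any strictly increasing index selection each term of the original sequence is revisited at most twice, so for a fixed $x^{\ast}$ the relevant $\ell_{p}$-sum splits into finitely many subsums each indexed by strictly increasing sequences, and hence is finite by the definition of weak $p$-Cauchyness. Two routine checks surround this: extracting an $\varepsilon$-separated subsequence from a bounded, non-relatively-compact sequence (valid since in a complete space relative compactness is equivalent to total boundedness), and the stability of weak $p$-precompactness under scaling and passage to subsets used at the start. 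I would also note that the uniformly-completely-continuous conclusion of the second paragraph is the conceptual role played by the hypothesis on $K(X,\mathbb{R})$, while the quantitative engine of the proof is the diagonal extraction above, powered by Theorem \ref{t1}$\rm{(ii)}$ together with the weak $p$-precompactness of $B_{X}$.
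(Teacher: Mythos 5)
Your proof is correct, and after the common opening it takes a genuinely different route from the paper's own argument. Both proofs begin the same way: Remark \ref{r2} places the values of $f'$ in $C_{p}(X,\mathbb{R})$, every $U$-bounded set $K$ is weakly $p$-precompact because $B_{X}$ is, and Theorem \ref{t1} is the engine. From there the paper argues operator-theoretically: $f'(K)(h)$ is relatively compact in $\mathbb{R}$ for each $h$ (Theorem \ref{t1}(ii) with a constant second sequence), $f'(K)$ is uniformly $p$-convergent (Theorem \ref{t1}(v)), hence uniformly completely continuous by the hypothesis; since weak $p$-precompactness of $B_{X}$ forces $X$ to contain no copy of $\ell_{1}$, Mayoral's theorem {\rm \cite[Theorem 1]{m}} (pointwise relatively compact, uniformly completely continuous subsets of $K(X,Y)$ are relatively compact when $X$ contains no copy of $\ell_{1}$) finishes the proof. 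You bypass Mayoral entirely: the $\varepsilon$-separation argument, the refinements making $(a_{n})$ and $(h_{k})$ weakly $p$-Cauchy, the pairing of consecutive terms (which keeps the $a$-sequence a genuine subsequence) and the doubling of $(h_{k})$ --- whose weak $p$-Cauchyness you justify correctly via the odd/even splitting, since each ``halved'' index repeats at most twice --- reduce everything to Theorem \ref{t1}(ii). The trade-off is instructive: your argument is elementary and self-contained, and, as you concede, it never actually uses the hypothesis that uniformly $p$-convergent subsets of $K(X,\mathbb{R})$ are uniformly completely continuous, so your second paragraph is logically idle and you have in fact proved the formally stronger statement that weak $p$-precompactness of $B_{X}$ alone suffices. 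The paper's route is the one in which that hypothesis and the absence of $\ell_{1}$ do real work, at the price of importing Mayoral's compactness criterion. One cosmetic point: to launch your contradiction you need only that $f'(K)$ fails to be totally bounded, not that it is bounded; boundedness of $f'(K)$ is true (by uniform continuity of $f'$ on $U$-bounded sets) but plays no role at that step.
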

\begin{proof}
By the Remark \ref{r2}, the range of $ f^{\prime} $ is contained in $ C_{p}(X, \mathbb{R} ) = K(X, \mathbb{R} ) $ (see Theorem 2.6 in \cite{ccl}), where we have used
the fact that $ B_{X} $ is weakly $ p $-precompact. Let $ B$ be a $ U $-bounded set.\ Given $ h \in X, $ let $ (x_{n})_{n} \subset B $ be a sequence.\ Since $ B_{X} $ is weakly $ p $-precompact, we can assume that $ (x_{n})_{n} $ is weakly $ p $-Cauchy.\ Hence Theorem \ref{t1}$ \rm{(ii)} $, implies that the sequence $ ( f^{\prime} (x_{n})(h))_{n} $ is converges
in $ \mathbb{R} $ and so, $ f^{\prime} (B)(h) $ is relatively compact in $ \mathbb{R}.$\ Moreover, by the  Theorem \ref{t1}$ \rm{(v)} $, $ f^{\prime} (B) $ is a uniformly $ p $-convergent set in $ K(X, \mathbb{R} ) .$\ Hence by the hypothesis $ f^{\prime} (B) $is 
uniformly completely continuous
in $ K(X, \mathbb{R} ) .$\ On the other hand by using again the fact that $ B_{X} $ is weakly $ p $-precompact, we observation that
$ X $ contains no copy of $ \ell_{1} .$\ Therefore, {\rm (\cite[Theorem 1]{m})} implies that
$ f^{\prime} (B) $ is relatively
compact in $ C_{p}(X, \mathbb{R} ) .$
\end{proof}

\begin{defn}\label{d3}
A mapping $ f:U \rightarrow Y $ is $ p $-convergent, if for every $ U $-bounded and weakly $ p $-converging sequence $ (x_{n}) _{n}$ to $ x\in U $, the sequence
$ ( f (x_{n}))_{n} $ norm converges to $ f (x) .$\ We denote the space of $ p $-convergent mappings by $ C_{C_{p}}(U, Y ) .$\
It is clear that for all $ X, Y , $ and for every open subset $ U \subseteq X, $ we have $ C_{wsc}^{p} (U,Y)\subseteq C_{C_{p}}(U, Y ) .$\\
We do not have any example of a mapping $ f\in C^{1u}(U, Y ) \cap C_{C_{p}}(U, Y ) $ which does not belong to $ C_{wsc}^{p} (U,Y). $
\end{defn}
\begin{thm}\label{t3}
Let $ U\subseteq X $ be an open convex subset and let $ f \in C^{1u}(U, Y ). $\ If $ f^{\prime} $ takes $ U$-bounded weakly $ p $-compact subsets of $ U$ into relatively compact subsets of $ K(X, Y), $ then the following assertions are hold:\\
$ \rm{(i)} $ $ f \in C_{C_{p}}(U, Y ); $\\
$ \rm{(ii)} $ $ f^{\prime} \in C_{C_{p}}(U,K(X, Y )). $
\end{thm}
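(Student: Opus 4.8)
The plan is to rest everything on one elementary observation: \emph{every relatively compact subset of $K(X,Y)$ is a uniformly $p$-convergent set in the sense of Definition \ref{d1}}. To prove this, let $H\subseteq K(X,Y)$ be relatively compact and let $(h_n)_n$ be weakly $p$-summable; then $(h_n)_n$ is bounded, say $\Vert h_n\Vert\le M$, and weakly null. Each $T\in H$ is compact, hence completely continuous, so $\Vert Th_n\Vert\to 0$. Given $\varepsilon>0$, choose a finite $\tfrac{\varepsilon}{2M}$-net $T_1,\dots,T_N$ of $\overline{H}$; picking $n_0$ with $\max_{i\le N}\Vert T_ih_n\Vert<\varepsilon/2$ for $n\ge n_0$ yields $\sup_{T\in H}\Vert Th_n\Vert<\varepsilon$ for $n\ge n_0$ by the triangle inequality. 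I would also record at the outset that singletons are $U$-bounded and weakly $p$-precompact, so the hypothesis already forces $f'(x)\in K(X,Y)$ for every $x\in U$; in particular $f'\colon U\to K(X,Y)$ is well defined and the statement of (ii) is meaningful.

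For (i), I would feed the observation into Theorem \ref{t1}. By hypothesis $f'$ carries $U$-bounded weakly $p$-precompact subsets of $U$ into relatively compact subsets of $K(X,Y)$, and by the observation these images are uniformly $p$-convergent subsets of $L(X,Y)$. This is exactly condition $\mathrm{(v)}$ of Theorem \ref{t1}, so $f\in C^{p}_{wsc}(U,Y)$; since $C^{p}_{wsc}(U,Y)\subseteq C_{C_p}(U,Y)$ by Definition \ref{d3}, assertion (i) follows immediately.

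For (ii), let $(x_n)_n$ be $U$-bounded and weakly $p$-convergent to $x\in U$. The set $K:=\{x_n:n\in\mathbb{N}\}\cup\{x\}$ is $U$-bounded and weakly $p$-precompact, so $f'(K)$ is relatively compact in $K(X,Y)$; hence $(f'(x_n))_n$ lies in a norm-compact set and every subsequence has a further norm-convergent subsequence. It therefore suffices to show that any norm limit $S$ of a subsequence $f'(x_{n_k})\to S$ must equal $f'(x)$. Here is the crux: interleave $(x_{n_k})_k$ with the constant sequence $x$ to form $(z_m)_m=(x_{n_1},x,x_{n_2},x,\dots)$, which is $U$-bounded and weakly $p$-Cauchy (indeed weakly $p$-convergent to $x$, since inserting zeros keeps a sequence in $\ell_p$). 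Applying Theorem \ref{t1}$\mathrm{(ii)}$ with the constant, hence weakly $p$-Cauchy, sequence $h_m=h$ shows $(f'(z_m)(h))_m$ is norm convergent in $Y$; its odd and even subsequences converge to $S(h)$ and to $f'(x)(h)$ respectively, forcing $S(h)=f'(x)(h)$. As $h\in X$ is arbitrary, $S=f'(x)$, so every subsequential norm limit of $(f'(x_n))_n$ is $f'(x)$, and relative compactness upgrades this to $f'(x_n)\to f'(x)$ in the operator norm, i.e. $f'\in C_{C_p}(U,K(X,Y))$.

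The net estimate in the observation and the standard ``relatively compact with a unique subsequential limit implies convergence'' fact are routine. The one genuinely delicate point is the identification $S=f'(x)$ in (ii): the $x_n$ converge only weakly $p$, so norm continuity of $f'$ is unavailable and the limit must be manufactured through the interleaving device together with Theorem \ref{t1}$\mathrm{(ii)}$. (Here I read the hypothesis' ``weakly $p$-compact'' as the paper's defined ``weakly $p$-precompact''; if one wishes to avoid Theorem \ref{t1} altogether, the same identity $f'(x_n)(h)\to f'(x)(h)$ can be extracted from part (i) alone by noting that $t^{-1}\bigl(f(\,\cdot+th)-f(\,\cdot)\bigr)\to f'(\,\cdot)(h)$ uniformly on $U$-bounded sets and exchanging the limits $t\to0$ and $n\to\infty$.)
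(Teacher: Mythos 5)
Your central observation---that every relatively compact subset of $K(X,Y)$ is uniformly $p$-convergent---is correct, and it is in fact the same fact the paper uses (without proof) in its own argument; your net argument for it is fine, as is the interleaving device for identifying subsequential limits. The genuine problem is the hinge you flag yourself: you read the hypothesis' ``weakly $p$-compact'' as ``weakly $p$-precompact,'' and these are not interchangeable. A weakly $p$-precompact set only yields weakly $p$-\emph{Cauchy} subsequences; a weakly $p$-compact set (the reading consistent with the paper's proof) is one whose sequences admit weakly $p$-\emph{convergent} subsequences with limits in the set, and since weakly $p$-Cauchy sequences need not be weakly $p$-convergent, the hypothesis on weakly $p$-compact sets is strictly weaker than the one you use. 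The paper's proof respects this: it applies the hypothesis only to sets of the form $\lbrace x_{n}: n\in\mathbb{N}\rbrace\cup\lbrace x\rbrace$ with $(x_{n})_{n}$ weakly $p$-convergent to $x$ (and to the interpolating points $c_{n}\in I(x_{n},x)$, which are again weakly $p$-convergent to $x$), never to a general weakly $p$-precompact set. Your route, by contrast, needs exactly condition $\rm{(v)}$ of Theorem \ref{t1} --- images of \emph{all} $U$-bounded weakly $p$-precompact sets are uniformly $p$-convergent --- which the weaker hypothesis does not supply; and your proof of (ii) inherits the same gap, since Theorem \ref{t1}$\rm{(ii)}$ is available only once $f\in C^{p}_{wsc}(U,Y)$ is known. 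That the weaker reading is the intended one is visible in the conclusion: under your reading, Theorem \ref{t1} immediately gives the stronger statement $f\in C^{p}_{wsc}(U,Y)$, whereas the paper deliberately concludes only $f\in C_{C_{p}}(U,Y)$ and stresses after Definition \ref{d3} that it cannot decide whether these classes differ inside $C^{1u}(U,Y)$ --- a distinction your reading would erase.

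The repair is to argue directly, which is what the paper does. For (i): given $(x_{n})_{n}$ $U$-bounded and weakly $p$-convergent to $x$, the Mean Value Theorem gives $c_{n}\in I(x_{n},x)$ with $\Vert f(x_{n})-f(x)\Vert\leq\Vert f^{\prime}(c_{n})(x_{n}-x)\Vert$; the set $\lbrace c_{n}\rbrace\cup\lbrace x\rbrace$ is $U$-bounded and weakly $p$-compact, so by hypothesis $\lbrace f^{\prime}(c_{n})\rbrace$ is relatively compact in $K(X,Y)$, hence (your observation) uniformly $p$-convergent, and weak $p$-summability of $(x_{n}-x)_{n}$ finishes the proof. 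For (ii), your parenthetical fallback is essentially the paper's argument: using uniform continuity of $f^{\prime}$ on $U$-bounded sets plus the Mean Value Theorem, approximate $f^{\prime}(x_{n})(\delta h)$ and $f^{\prime}(x)(\delta h)$ by the increments $f(x_{n}+\delta h)-f(x_{n})$ and $f(x+\delta h)-f(x)$, apply part (i) to the weakly $p$-convergent sequences $(x_{n}+\delta h)_{n}$ and $(x_{n})_{n}$ to get pointwise convergence $f^{\prime}(x_{n})(h)\rightarrow f^{\prime}(x)(h)$, and then upgrade to norm convergence using relative compactness of $f^{\prime}(\lbrace x_{n}\rbrace\cup\lbrace x\rbrace)$, exactly as in the paper's appeal to Lemma 4.2 of \cite{va}. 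So your (ii) is salvageable without Theorem \ref{t1}, but your (i) as written rests on a hypothesis the theorem does not grant and needs to be replaced by the direct Mean Value Theorem argument.
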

\begin{proof}
$ \rm{(i)}$ Let $ (x_{n})_{n} $ be a $ U$-bounded sequence and weakly $ p $-convergent to $ x \in U. $\ Since the set $ K:=\lbrace x_{n} :n\in \mathbb{N} \rbrace \cup\lbrace x\rbrace $ is weakly $ p $-compact subset of $ U,$ by the Mean Value Theorem {\rm (\cite[Theorem 6.4]{ch})},
for each $ n \in \mathbb{N} , $ there is $ c_{n} \in I(x_{n}, x) $ such that
$$ \parallel f(x_{n}) -f(x) \parallel \leq \parallel f^{\prime}(c_{n}) (x_{n}-x) \parallel \leq \sup_{k}\parallel f^{\prime}(c_{k}) (x_{n}-x)\parallel ~~~~~~(n\in \mathbb{N}).$$
But, $ (c_{n})_{n} $ is a weakly $ p $-convergent sequence to $ x$ and $ U $-bounded.\ Therefore, by the hypothesis, the set $ \lbrace f^{\prime}(c_{n}) :n\in \mathbb{N} \rbrace $ is relatively compact in $ K(X, Y ) $
and so, uniformly $ p $-convergent.\ Since $ (x_{n}- x)_{n} $ is weakly $ p $-summable, we have:
$$ \displaystyle\lim_{n} \parallel f(x_{n})-f(x) \parallel\leq \lim_{n} \sup_{k}\parallel f^{\prime}(c_{k}) (x_{n}-x)\parallel =0.$$
$\rm{(ii)} $ Suppose that $ (x_{n})_{n} $ is a $ U $-bounded and weakly $ p $-convergent sequence.\ Consider $ B := \lbrace x_{n}: n \in \mathbb{N} \rbrace\cup\lbrace x\rbrace $ and let $ d := min\lbrace 1, dist(B, \partial U)\rbrace. $\ The set
$$ B^{\prime}:=B+\frac{d}{2}B_{X}\subset U$$
is also $ U $-bounded.\ Since $ f \in C^{1u}(U, Y ),$ $ f^{\prime} $ is uniformly continuous on $ B^{\prime}.$\ Hence, for given $ \varepsilon >0, $ there exists $ 0 <\delta <\frac{d}{4} $ such that if $t_{1} ,t_{2}\in B^{\prime}$
satisfy $ \parallel t_{1} -t_{2}\parallel <2\delta,$ then
\begin{equation}\label{eq1}
\parallel f^{\prime}(t_{1})-f^{\prime}(t_{2}) \parallel <\frac{\varepsilon}{4}.
\end{equation}
Fix $ n\in\mathbb{N} $ and $ h\in B_{X}. $\
If $ c\in I(x_{n},x_{n}+ x_{n}h) ,$ then as in the proof of Theorem \ref{t1} we have
$$ \parallel c-x_{n} \parallel \leq <2\delta <\frac{d}{2},$$
and so,
$c=x_{n} +(c-x_{n})\in B^{\prime}=B+\frac{d}{2}B_{X}. $\ Therefore, $ I(x_{n}, x_{n} +\delta h)\subset B^{\prime}. $\ Similarly, $ I(x, x+\delta h) \subset B^{\prime}.$\
As an immediate consequence of the Mean Value Theorem {\rm (\cite[Theorem 6.4]{ch})},
and formula (\ref{eq1}), we obtain
\begin{flushleft}
$ \parallel f^{\prime}(x_{n}) ( \delta h)-f(x_{n}+
\delta h)+f(x_{n})\parallel ~ ~~~`$
\end{flushleft}
$$ ~~~~~~~\leq \displaystyle\sup_{c\in I(x_{n},x_{n}+\delta h)} \parallel f^{\prime} (c) - f^{\prime} (x_{n}) \parallel \parallel\delta h\parallel\leq\frac{\varepsilon \delta}{4} . $$
Similary,
\begin{flushleft}
$ \parallel f(x+
\delta h)-f(x)- f^{\prime}(x) ( \delta h)\parallel ~ $
\end{flushleft}
$$ \leq \displaystyle\sup_{c\in I(x,x+\delta h)} \parallel f^{\prime} (c) - f^{\prime} (x) \parallel \parallel\delta h\parallel\leq\frac{\varepsilon \delta}{4} . $$
In the other word, the sequences $ (x_{n}+\delta h)_{n} $ and $ (x_{n})_{n} $ are $ U$-bounded and weakly $ p $-convergent in $ U $ to $ x + \delta h $ and $ x, $ respectively.\
 So, there exists $ n_{0}\in \mathbb{N} $ so that for $ n> n_{0},$\ 
$ \parallel f(x_{n}+\delta h)-f(x+\delta h)\parallel <\frac{\varepsilon \delta}{4},~~~$  and $~~~~ \parallel f(x_{n})-f(x)\parallel <\frac{\varepsilon \delta}{4}. $

Therefore, for $ n> n_{0} ,$
$\parallel f^{\prime}(x_{n}) (h) - f^{\prime}(x) (h)\parallel< \varepsilon.$\
Hence, for every $ h \in X, $ we have $ \displaystyle \lim_{n}f^{\prime}(x_{n})(h)=f^{\prime}(x)(h).$\ Since $ f^{\prime}(B) $ is relatively compact in $ K(X, Y ), $  Lemma 4.2 of \cite{va}, implies that
$ \displaystyle \lim_{n}f^{\prime}(x_{n})=f^{\prime}(x).$\
\end{proof}



\begin{thebibliography}{99}
\bibitem{AlbKal} F.\ Albiac and N.\ J.\ Kalton,{ Topics in Banach Space Theory,} Graduate
Texts in Mathematics, 233, Springer, New York, 2006.

\bibitem{A}
R.\ M.\ Aron, Weakly uniformly continuous and weakly sequentially continuous entire functions, Advances in Holomorphy, in: Math. Stud., vol. 34, North-Holland, Amsterdam, 1979, pp. 47-66.
\bibitem{AHV}
R.\ M.\ Aron, C.\ Hervés, M.\ Valdivia, Weakly continuous mappings on Banach spaces, J.\ Funct.\ Anal. \textbf{52} (1983) 189-204.
\bibitem{afz} M.\ Alikhani, M.\ Fakhar and J.\ Zafarani, $ p $-convergent operators and $ p$-Schur property, Analysis Mathematica (Accept), (2019).
\bibitem{ce}
F.\ Bombal, J.\ M.\ Guti\'errez, I. Villanueva, Derivative and factorization of holomorphic functions, J.\ Math.\ Anal.\ Appl.\ \textbf{348} (2008) 444-453.
\bibitem{cs} J.\ M.\ F.\ Castillo and F.\ S$\acute{a} $nchez, Dunford-Pettis-like properties of continuous function vector spaces, Rev.\ Mat.\ Univ.\ Complut.\ Madrid \textbf{6} (1993), 43-59.\
\bibitem{ch}
S.\ B.\ Chae, Holomorphy and Calculus in Normed Spaces, Monogr. Textbooks Pure Appl.
Math.\ 92, Dekker, New York 1985.

\bibitem{ccl} D.\ Chen, J.\ Alejandro Ch$ \acute{a} $vez-Dom$ \acute{i}$nguez and L.\ Li, $ p $-converging operators and Dunford-Pettis
property of order $ p, $ J.\ Math.\ Anal.\ Appl.\ \textbf{461} (2018), 1053-1066.
\bibitem{ce2}
R.\ Cilia, J.\ M.\ Guti\'errez, Weakly sequentially continuous differentiable mappings. J. Math. Anal. Appl. \textbf{360} (2009), 609-623.
\bibitem{ce3}
R.\ Cilia, J.\ M.\ Guti\'errez, Factorization of weakly continuous differentiable mappings. Bull.\ Braz.\ Math.\ Soc.\ (N.S.) \textbf{40} (2009), 371-380.
\bibitem{ce4}
R.\ Cilia, J.\ M.\ Guti\'errez, and G.\ Saluzzo, Compact factorization of differentiable mappings. Proc.\ Amer.\ Math. Soc.\ \textbf{137} (2009), 1743-1752.

\bibitem {dm} M.\ Dehghani and S.\ M.\ Moshtaghioun, On $ p $-Schur property of Banach spaces, Ann.\ Funct.\ Anal, \textbf {9} (2018), 123-136.\
\bibitem {dh}
R.\ Deville, P.\ H\'ajek, Smooth noncompact operators from C(K), K scattered, Israel J. Math. \textbf{162} (2007) 29-56.\
\bibitem {DM} R. Deville, E. Matheron, Pyramidal vectors and smooth functions on Banach spaces, Proc. Amer. Math. Soc. \textbf{128} (2000) 3601-3608.


\bibitem{djt} J.\ Diestel, H.\ Jarchow and A.\ Tonge, Absolutely Summing Operators, Cambridge Univ.\ Press, (1995).\




\bibitem{g12} I.\ Ghenciu, The $ p $-Gelfand-Phillips property in spaces of operators and Dunford-Pettis like sets, Acta Math.\ Hungar. \textbf{155} (2018), 439-457.
\bibitem{gg}
M.\ Gonz\'alez and J.\ M.\ Guti\'errez, The compact weak topology on a Banach space, Proc.\ Roy.\ Soc.\ Edinburgh Sect.\ \textbf{120} (1992), 367-379.

\bibitem{gg1}M.\ Gonz\'alez and J.\ M.\ Guti\'errez, Factorization of weakly continuous holomorphic mappings,
Studia Math. \textbf{118} (1996), 117-133.
\bibitem{h}
P.\ H\'ajek, Smooth functions on $ C(K), $ Israel J. Math. \textbf{107} (1998) 237-252.

\bibitem{ccl1} L.\ Li, D.\ Chen and J.\ Alejandro Ch$ \acute{a} $vez-Dom$ \acute{i} $nguez, Pelczy$ \acute{n} $ski's property $ (V^{\ast}) $ of order $ p$ and its
quantification, Math.\ Nachr.\ \textbf{291} (2018) 420-442.
\bibitem{m}
F.\ Mayoral, Compact sets of compact operators in absence of $ \ell_{1} ,$ Proc.\ Amer.\ Math.\ Soc.\ \textbf{129} (2001) 79-82.
\bibitem {spd}
E.\ Serrano, C.\ Pi\~neiro, and J.\ M.\ Delgado, Equicompact sets of operators defined on Banach spaces, Proc.\ Amer.\ Math. Soc.\ \textbf{134} (2006), 689-695.
\bibitem {va}
M.\ M.\ Vainberg, Variational Methods for the Study of Nonlinear Operators, Holden-Day, San Francisco, 1964.
\end{thebibliography}
\end{document}